\documentclass[12pt,reqno]{amsart}
\usepackage{amsmath, amsthm, amssymb, stmaryrd}

\topmargin .1cm
\advance \topmargin by -\headheight
\advance \topmargin by -\headsep
     
\setlength{\paperheight}{270mm}%
\setlength{\paperwidth}{192mm}%
\textheight 22.5cm
\oddsidemargin .1cm
\evensidemargin \oddsidemargin
\marginparwidth 1.25cm
\textwidth 14cm
\setlength{\parskip}{0.05cm}

\newtheorem{theorem}{Theorem}[section]
\newtheorem{lemma}[theorem]{Lemma}
\newtheorem{corollary}[theorem]{Corollary}

\theoremstyle{definition}

\theoremstyle{remark}

\numberwithin{equation}{section}

\def\bfa{{\mathbf a}}
\def\bfb{{\mathbf b}}

\def\bfd{{\mathbf d}}

\def\bfi{{\mathbf i}}
\def\bfj{{\mathbf j}}
\def\bfm{{\mathbf m}}

\def\bfu{{\mathbf u}}

\def\bfx{{\mathbf x}}
\def\bfy{{\mathbf y}}
\def\bfz{{\mathbf z}}

\def\calA{{\mathcal A}}  

\def\calB{{\mathcal B}}

\def\calI{{\mathcal I}}
\def\calJ{{\mathcal J}}

\def\dbN{{\mathbb N}}  

\def\dbZ{{\mathbb Z}}

\def\alp{{\alpha}} \def\bfalp{{\boldsymbol \alpha}} \def\alptil{{\widetilde \alpha}}
\def\bet{{\beta}}  
\def\gam{{\gamma}}

\def\tet{{\theta}}

\def\sig{{\sigma}}  

\def\Ups{{\Upsilon}}

\def\ome{{\omega}} \def\Ome{{\Omega}}

\def\eps{\varepsilon}

\def\le{\leqslant} \def\ge{\geqslant}

\def\d{{\,{\rm d}}}

\begin{document}
\title[Paucity problems]{Paucity problems and some relatives of Vinogradov's mean value 
theorem}
\author[Trevor D. Wooley]{Trevor D. Wooley}
\address{Department of Mathematics, Purdue University, 150 N. University Street, West 
Lafayette, IN 47907-2067, USA}
\email{twooley@purdue.edu}
\subjclass[2010]{11D45, 11P05}
\keywords{Paucity, Vinogradov's mean value theorem.}
\date{}
\dedicatory{}
\begin{abstract}When $k\ge 4$ and $0\le d\le (k-2)/4$, we consider the system of 
Diophantine equations
\[
x_1^j+\ldots +x_k^j=y_1^j+\ldots +y_k^j\quad (1\le j\le k,\, j\ne k-d).
\]
We show that in this cousin of a Vinogradov system, there is a paucity of non-diagonal 
positive integral solutions. Our quantitative estimates are particularly sharp when 
$d=o(k^{1/4})$.
\end{abstract}
\maketitle

\section{Introduction} Recent progress on Vinogradov's mean value theorem has resolved 
the main conjecture in the subject. Thus, writing $J_{s,k}(X)$ for the number of integral 
solutions of the system of equations
\begin{equation}\label{1.1}
x_1^j+\ldots +x_s^j=y_1^j+\ldots +y_s^j\quad (1\le j\le k),
\end{equation}
with $1\le x_i,y_i\le X$ $(1\le i\le s)$, it is now known that whenever $\eps>0$, one has
\begin{equation}\label{1.2}
J_{s,k}(X)\ll X^{s+\eps}+X^{2s-k(k+1)/2}
\end{equation}
(see \cite{BDG2016} or \cite{Woo2016, Woo2019}). Denote by $T_s(X)$ the number of 
$s$-tuples $\bfx$ and $\bfy$ in which $1\le x_i,y_i\le X$ $(1\le i\le s)$, and 
$(x_1,\ldots ,x_s)$ is a permutation of $(y_1,\ldots ,y_s)$. Thus 
$T_s(X)=s!X^s+O(X^{s-1})$. A conjecture going beyond the main conjecture (\ref{1.2}) 
asserts that when $1\le s<\tfrac{1}{2}k(k+1)$, one should have
\begin{equation}\label{1.3}
J_{s,k}(X)=T_s(X)+o(X^s).
\end{equation}
This conclusion is essentially trivial for $1\le s\le k$, in which circumstances one has the 
definitive statement $J_{s,k}(X)=T_s(X)$. When $s\ge k+2$, meanwhile, the conclusion 
(\ref{1.3}) is at present far beyond our grasp. This leaves the special case $s=k+1$. Here, 
one has the asymptotic relation
\begin{equation}\label{1.4}
J_{k+1,k}(X)=T_{k+1}(X)+O(X^{\sqrt{4k+5}})
\end{equation}
due to the author joint with Vaughan \cite[Theorem 1]{VW1997}. An analogous conclusion 
is available when the equation of degree $k-1$ in the system (\ref{1.1}) is removed, but in 
no other close relative of Vinogradov's mean value theorem has such a conclusion been 
obtained hitherto. Our purpose in this paper is to derive estimates of strength paralleling 
(\ref{1.4}) in systems of the shape (\ref{1.1}) in which a large degree equation is 
removed.\par

In order to describe our conclusions, we must introduce some notation. When $k\ge 2$ and 
$0\le d<k$, we denote by $I_{k,d}(X)$ the number of integral solutions of the system of 
equations
\begin{equation}\label{1.5}
x_1^j+\ldots +x_k^j=y_1^j+\ldots +y_k^j\quad (1\le j\le k,\, j\ne k-d),
\end{equation}
with $1\le x_i,y_i\le X$ $(1\le i\le k)$. Also, when $k\ge 3$ and $d\ge 0$, we define the 
exponent
\begin{equation}\label{1.6}
\gam_{k,d}=\min_{2\le r\le k}\biggl( r+\frac{k}{r}+\sum_{l=1}^r\max\{d-l+1,0\}
\biggr) .
\end{equation}

\begin{theorem}\label{theorem1.1} Suppose that $k\ge 3$ and $0\le d<k/2$. Then, for each 
$\eps>0$, one has
\[
I_{k,d}(X)-T_k(X)\ll X^{\gam_{k,d}+\eps}.
\]
\end{theorem}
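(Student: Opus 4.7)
The plan is to apply Newton's identities and a generating-function argument to reformulate the system (\ref{1.5}) as a tractable polynomial identity, then bound the non-diagonal count by optimising over a parameter $r$.

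First, for $d<k/2$, I would establish the structural identity. Setting $F_\bfz(t) = \prod_{i=1}^k (1 - z_i t)$ and using $\log F_\bfz(t) = -\sum_{j\ge 1} p_j(\bfz) t^j/j$, the system (\ref{1.5}) yields $\log(F_\bfx(t)/F_\bfy(t)) \equiv -c\, t^{k-d} \pmod{t^{k+1}}$ with $c = (p_{k-d}(\bfx) - p_{k-d}(\bfy))/(k-d)$. Since $d<k/2$ forces $2(k-d) > k$, the exponential $\exp(-ct^{k-d})$ truncates to $1 - ct^{k-d}$ modulo $t^{k+1}$, giving $F_\bfx(t) \equiv F_\bfy(t)(1 - c\,t^{k-d}) \pmod{t^{k+1}}$. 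Equivalently, the companion polynomials $f_\bfz(t) = \prod_i(t - z_i)$ satisfy $f_\bfx(t) - f_\bfy(t) = -c\,g_d(t;\bfy)$, where $g_d(t;\bfy)$ is the monic polynomial of degree $d$ whose coefficients match the top $d+1$ coefficients of $f_\bfy$. In particular $e_i(\bfx) = e_i(\bfy)$ for $1 \le i \le k-d-1$, and $\bfx$ lies in a one-parameter family determined by $\bfy$ and $c$.

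Second, I would decompose the non-diagonal pairs $(\bfx, \bfy)$ according to the uncancelled size $r$: after removing common multiset elements, $r$ entries remain on each side, forming disjoint multisets. Summing over $r \ge 2$ and allowing $X^{k-r}$ choices for matched elements gives
\[
I_{k,d}(X) - T_k(X) \ll \sum_{r \ge 2} X^{k-r}\, U_r(X),
\]
with $U_r(X)$ counting disjoint-multiset solutions to the $r$-variable reduction of (\ref{1.5}). For $r < k-d$ the reduction contains the full Vinogradov subsystem $p_1, \ldots, p_r$, which forces matched multisets and so gives $U_r(X) = 0$ on that range. For the values $r \in [2, k-d)$ that nevertheless enter $\gamma_{k,d}$, an alternative route is required: apply H\"older's inequality to the exponential-sum representation of $I_{k,d}(X)$ and invoke Vinogradov's main conjecture (\ref{1.2}) on a well-chosen sub-system to recover the same shape of bound.

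Third, for each $r \in [2,k]$ I would establish $X^{k-r}U_r(X) \ll X^{r + k/r + S_r + \eps}$, where $S_r = \sum_{l=1}^r \max\{d-l+1, 0\}$. The piece $r + k/r$ is the typical trade-off in a H\"older-Vinogradov argument, balancing an $X^r$ diagonal factor against Vinogradov's $X^{k/r}$ bound obtained from (\ref{1.2}). The sum $S_r$ encodes the effect of the $d$ high-degree equations $p_{k-d+1}, \ldots, p_k$ in (\ref{1.5}): iterating the polynomial identity of Step 1 across these $d$ additional constraints contributes, at each step indexed by $l \in \{1, \ldots, \min(r,d)\}$, a cost of $X^{d-l+1}$, telescoping into $S_r$. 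Taking $\min_r$ of the resulting bounds then recovers $X^{\gamma_{k,d}+\eps}$. The main obstacle is this last step: threading the $d$ proportionality constraints through Vinogradov's MVT in such a way as to obtain the precise exponent $r+k/r+S_r$ requires a delicate telescoping or inductive argument on $d$, which I expect to be the most technically demanding part of the proof.
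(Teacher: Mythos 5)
Your Step 1 correctly reproduces the paper's polynomial identity (the truncation of $\exp(-ct^{k-d})$ using $2(k-d)>k$, leading to $f_\bfx(t)-f_\bfy(t)=-c\,g_d(t;\bfy)$), but you stop exactly where the real work begins. The engine of the proof is obtained by substituting $t=y_j$ into this identity, which yields the multiplicative relations $(k-d)\prod_{i=1}^k(y_j-x_i)=\tau_d(\bfy;y_j)\,h$, and then comparing two indices $s,t$ to eliminate $h$ entirely: $\tau_d(\bfx;y_t)\prod_i(y_s-x_i)=\tau_d(\bfx;y_s)\prod_i(y_t-x_i)$. Without these product identities there is no route to the exponent $k/r$. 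In particular, your claim that $r+k/r$ arises from ``H\"older plus Vinogradov's main conjecture (\ref{1.2}) on a sub-system'' cannot work: at $s=k$ variables the bound (\ref{1.2}) gives only $J_{k,k}(X)\ll X^{k+\eps}$, which is the diagonal main term itself and carries no information about the non-diagonal count. The $X^{k/r}$ in the paper comes from a purely multiplicative argument: after extracting greatest common divisors among the variables $u_{ij}=x_i-y_j$ in the $r$ equal products, one partitions the resulting parameters into $r$ blocks $B_1,\ldots,B_r$ with $\prod_p B_p\le X^k$, so some $B_p\le X^{k/r}$, and divisor-function estimates then determine all remaining parameters up to $X^{\eps}$.

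Two further points. First, your decomposition by ``uncancelled size'' is vacuous: if $x_l=y_m$ for any $l,m$, the remaining $k-1$ pairs satisfy $k-1$ symmetric equations and Steinig's theorem forces the solution to be diagonal, so every non-diagonal solution already has disjoint multisets and your parameter $r$ is always $k$ --- there is nothing to optimise. The correct free parameter is the number of \emph{distinct values} among the coordinates: for any $r$, either both $\bfx$ and $\bfy$ take fewer than $r$ distinct values (this case is $O(X^{r-1})$ by a H\"older/orthogonality argument plus Steinig --- the one legitimate use of H\"older here), or one may fix $r$ distinct values $y_1,\ldots,y_r$ and run the multiplicative argument on the $r$ resulting product relations. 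Second, the term $S_r=\sum_{l=1}^r\max\{d-l+1,0\}$ does not come from ``iterating the identity''; it is the cost of fixing the $r$-tuple of auxiliary values $(\tau_d(\bfx;y_1),\ldots,\tau_d(\bfx;y_r))$, where $\tau_d(\bfx;w)$ is a degree-$d$ polynomial whose coefficient of $w^l$ has size $O(X^{d-l})$. Bounding the number of such value-tuples by $X^{S_r}$ requires a separate divided-difference induction on $d$ (the paper's Lemma \ref{lemma3.3}), which improves on both the trivial bounds $X^{rd}$ and $X^{d(d+1)/2}$. As it stands your proposal identifies the correct starting identity and the correct final exponent, but the mechanism connecting them is absent.
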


When $k$ is large and $d$ is small compared to $k$, the conclusion of this theorem 
provides strikingly powerful paucity estimates.

\begin{corollary}\label{corollary1.2} Suppose that $d\le \sqrt{k}$. Then
\[
I_{k,d}(X)-T_k(X)\ll X^{\sqrt{4k+1}+d(d+1)/2}.
\]
In particular, when $d=o(k^{1/4})$, one has
\[
I_{k,d}(X)=T_k(X)+O(X^{(2+o(1))\sqrt{k}}).
\]
\end{corollary}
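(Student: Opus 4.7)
My plan is to deduce Corollary~\ref{corollary1.2} from Theorem~\ref{theorem1.1} by making a single well-chosen specialization of $r$ in the minimization \eqref{1.6} defining $\gam_{k,d}$. The first observation is that whenever $r\ge d$, the inner sum in \eqref{1.6} collapses: the nonzero contributions to $\sum_{l=1}^r\max\{d-l+1,0\}$ come only from $l=1,\ldots,d$, producing the triangular number $d(d+1)/2$. Consequently, for any integer $r$ with $\max\{2,d\}\le r\le k$ one has
\[
\gam_{k,d}\le r+\frac{k}{r}+\frac{d(d+1)}{2},
\]
and the task reduces to minimizing $r+k/r$ over such integers.

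The key idea is the exact continuous threshold: the inequality $r+k/r\le \sqrt{4k+1}$ is equivalent to $r$ lying in the closed interval $[(\sqrt{4k+1}-1)/2,\,(\sqrt{4k+1}+1)/2]$, whose length equals $1$ and which therefore contains an integer. I would take $r_0=\lfloor(\sqrt{4k+1}+1)/2\rfloor$ as this integer. Two routine verifications remain. First, $r_0\ge 2$ for all $k\ge 3$, since $(\sqrt{13}+1)/2>2$. Second, $r_0\ge d$ whenever $d\le\sqrt{k}$: this reduces to $(\sqrt{4k+1}+1)/2\ge d$, equivalently $k\ge d(d-1)$, which is immediate from $d^2\le k$. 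Substituting $r=r_0$ into Theorem~\ref{theorem1.1} then yields
\[
I_{k,d}(X)-T_k(X)\ll X^{\sqrt{4k+1}+d(d+1)/2+\eps},
\]
and the $\eps$ is absorbed by a marginal inflation of either term in the exponent.

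For the second assertion, the hypothesis $d=o(k^{1/4})$ forces $d(d+1)/2=o(\sqrt{k})$, while $\sqrt{4k+1}=2\sqrt{k}+O(k^{-1/2})$. Adding these two contributions produces an exponent of the shape $(2+o(1))\sqrt{k}$, which is precisely the claimed bound.

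I do not foresee any substantial obstacle; the deduction is a one-variable optimization followed by bookkeeping. The only point demanding a moment's care is the simultaneous verification that $r_0$ lies both in $[d,k]$ and inside the critical interval around $\sqrt{k}$. The interval has length exactly $1$ by construction, and the hypothesis $d\le\sqrt{k}$ furnishes just enough room on the lower end; were $d$ materially larger than $\sqrt{k}$, these two constraints would begin to conflict, which is consistent with the scope of the corollary.
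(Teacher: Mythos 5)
Your proposal is correct and follows essentially the same route as the paper: specialize $r$ in \eqref{1.6} to an integer satisfying $r\ge d$ and $r+k/r\le\sqrt{4k+1}$, so that the inner sum collapses to $d(d+1)/2$, and then invoke Theorem \ref{theorem1.1}. The only difference is that the paper takes $r$ to be the integer closest to $\sqrt{k}$ rather than $\lfloor(\sqrt{4k+1}+1)/2\rfloor$, which is immaterial; your explicit observation that the admissible interval for $r$ has length exactly $1$ is the same mechanism underlying the paper's choice.
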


Although for larger values of $d$ our paucity estimates become weaker, they remain 
non-trivial whenever $d<(k-2)/4$.

\begin{corollary}\label{corollary1.3} Provided that $d\ge 1$ and $k\ge 4d+3$, one has
\[
I_{k,d}(X)=k!X^k+O(X^{k-1/2}).
\]
Moreover, when $1\le d\le k/4$, one has
\[
I_{k,d}(X)-T_k(X)\ll X^{\sqrt{4k(d+1)+(d+1)^2}},
\]
so that whenever $\eta$ is small and positive, and $1\le d\le \eta^2 k$, then
\[
I_{k,d}(X)=T_k(X)+O(X^{3\eta k}).
\]
\end{corollary}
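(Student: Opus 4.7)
The plan is to derive all three assertions from Theorem~\ref{theorem1.1} by evaluating the minimum defining $\gamma_{k,d}$ in (\ref{1.6}) at indices $r$ tailored to each hypothesis, combined with the elementary expansion $T_k(X) = k!X^k + O(X^{k-1})$.

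For the first assertion, I would take $r = 2$ in (\ref{1.6}). Since $d \ge 1$, the two summands $\max\{d - l + 1, 0\}$ for $l = 1, 2$ are $d$ and $d - 1$, so
\[
\gamma_{k,d} \le 2 + \frac{k}{2} + (2d - 1) = \frac{k}{2} + 2d + 1.
\]
The hypothesis $k \ge 4d + 3$ forces this to be at most $k - \tfrac{1}{2}$, and combining Theorem~\ref{theorem1.1} with the expansion of $T_k(X)$ yields $I_{k,d}(X) = k!X^k + O(X^{k-1/2})$ (the $X^\eps$ loss being absorbed by the strict form of the inequality when $k > 4d + 3$).

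For the second assertion, I need to show $\gamma_{k,d} \le \sqrt{4k(d+1) + (d+1)^2}$ under $1 \le d \le k/4$. The optimal $r$ depends on the ratio of $\sqrt{k}$ to $d+1$. When $\sqrt{k}$ comfortably exceeds $d+1$, I would take $r = \lceil \sqrt{k}\, \rceil$, placing $r$ in the branch where $S(r,d) := \sum_{l=1}^r \max\{d-l+1,0\} = d(d+1)/2$; here $\gamma_{k,d} \le 2\sqrt{k} + 1 + d(d+1)/2$, and squaring reduces the claim to an inequality verifiable in this range. Otherwise, I would work within the branch $r \le d + 1$, where $S(r,d) = rd - r(r-1)/2$, and select $r$ near the interior critical point of the expression $F(r) := r(d+1) + k/r - r(r-1)/2$, pinned down by $k = r^2(\tfrac{2d+3}{2} - r)$. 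Substituting at the critical point gives $F^{*} = r(2d+3) - \tfrac{3}{2}r^2$, and the inequality $(F^{*})^2 \le 4k(d+1) + (d+1)^2$ reduces after expansion to
\[
\frac{9}{4}r^4 - (2d+5)r^3 + (2d+3)r^2 \le (d+1)^2,
\]
whose validity follows from observing that the relevant critical $r$ lies between the positive roots of the inner quadratic $\tfrac{9}{4}r^2 - (2d+5)r + (2d+3)$, making the left-hand side non-positive.

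The principal obstacle will be the transitional regime where $\sqrt{k}$ is comparable to $d + 1$: here neither $r = 2$ nor $r = \lceil \sqrt{k}\, \rceil$ suffices on its own, and the critical-point analysis within the $r \le d + 1$ branch becomes essential; one must verify that the two regimes together cover all $(k,d)$ with $1 \le d \le k/4$. Granting the second assertion, the third is immediate: for $\eta \in (0, 1/2]$ and $1 \le d \le \eta^2 k$, one has $d + 1 \le 2\eta^2 k$, so
\[
\sqrt{4k(d+1) + (d+1)^2} \le \sqrt{8\eta^2 k^2 + 4\eta^4 k^2} = \eta k \sqrt{8 + 4\eta^2} \le 3\eta k,
\]
and a small perturbation of $\eta$ absorbs the $X^\eps$ loss from Theorem~\ref{theorem1.1}, yielding $I_{k,d}(X) = T_k(X) + O(X^{3\eta k})$.
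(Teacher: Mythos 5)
Your handling of the first and third assertions is correct and coincides with the paper's: taking $r=2$ in (\ref{1.6}) gives $\gam_{k,d}\le 2+\tfrac12 k+2d-1=\tfrac12(k+4d+2)\le k-\tfrac12$ under $k\ge 4d+3$, and the third claim is the same arithmetic consequence of the second that the paper records. The problem is the second assertion, where you have missed the one observation that makes the whole thing a two-line computation: every summand of $\sum_{l=1}^{r}\max\{d-l+1,0\}$ is at most $d$, so $\gam_{k,d}\le r(d+1)+k/r$ for \emph{every} admissible $r$ and \emph{both} of your branches simultaneously. Writing $N=k/(d+1)$ and taking $r$ to be the integer nearest $\sqrt{N}$, one has $r(d+1)+k/r=(d+1)(r+N/r)\le(d+1)\sqrt{4N+1}=\sqrt{4k(d+1)+(d+1)^2}$ by the standard nearest-integer bound. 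The target exponent is manufactured exactly so that this crude estimate hits it; no case analysis, no critical-point computation, and the $(d+1)^2$ under the root is precisely the scaled ``$+1$'' that absorbs the integrality of $r$.

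Your substitute argument has concrete defects beyond mere inelegance. The first regime is misdrawn: with $r=\lceil\sqrt{k}\,\rceil$ and the saturated sum $d(d+1)/2$ you claim $2\sqrt{k}+1+d(d+1)/2\le\sqrt{4k(d+1)+(d+1)^2}$, but for $d\asymp k^{2/5}$ (where $\sqrt{k}$ certainly ``comfortably exceeds'' $d+1$) the left side is $\asymp k^{4/5}$ while the right side is $\asymp k^{7/10}$, so the ``inequality verifiable in this range'' is false there; the true crossover between the saturated and unsaturated branches sits at $d+1\asymp k^{1/3}$, not at $d+1\asymp\sqrt{k}$. In the second regime your verification is performed only at the exact real critical point of $F$: you do not show that an interior critical point exists (it does not once $k>(2d+3)^3/54$, i.e.\ exactly in the transitional range you worry about), that the ``relevant'' root of $F'(r)=0$ is the one lying between the roots of your quadratic $\tfrac94 r^2-(2d+5)r+(2d+3)$, or that the nearest admissible \emph{integer} $r$ still satisfies the target inequality — a back-of-envelope computation shows the loss $2F^{*}\delta$ from perturbing $r$ by $O(1)$ is itself of order $(d+1)^2$, so the remaining slack is borderline and the omitted verification is not routine. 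You flag the transitional regime yourself as an unresolved obstacle, and it genuinely is one; the uniform bound $S(r,d)\le rd$ dissolves it entirely.
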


Previous work on this problem is confined to the two cases considered by Hua 
\cite[Lemmata 5.2 and 5.4]{Hua1965}. Thus, the asymptotic formula (\ref{1.4}) derived by 
the author jointly with Vaughan \cite[Theorem 1]{VW1997} is tantamount to the case 
$d=0$ of Theorem \ref{theorem1.1}. Meanwhile, it follows from 
\cite[Theorem 2]{VW1997} that
\[
I_{k,1}(X)=T_k(X)+O(X^{\gam_{k,1}-1+\eps}),
\]
and the error term here is slightly sharper than that provided by the case $d=1$ of 
Theorem \ref{theorem1.1}. The conclusion of Theorem \ref{theorem1.1} is new whenever 
$d\ge 2$. It would be interesting to derive analogues of Theorem \ref{theorem1.1} in 
which more than one equation is removed from the Vinogradov system (\ref{1.1}), or 
indeed to derive analogues in which the number of variables is increased and yet one is 
able nonetheless to confirm the paucity of non-diagonal solutions. We have more to say on 
such matters in \S5 of this paper. For now, we confine ourselves to remarking that when 
many, or even most, lower degree equations are removed, then approaches based on the 
determinant method are available. Consider, for example, natural numbers $d_1,\ldots ,d_k$ 
with $1\le d_1<d_2<\ldots <d_k$ and $d_k\ge 2s-1$. Also, denote by $M_{\bfd,s}(X)$ the 
number of integral solutions of the system of equations
\[
x_1^{d_j}+\ldots +x_s^{d_j}=y_1^{d_j}+\ldots +y_s^{d_j}\quad (1\le j\le k),
\]
with $1\le x_i,y_i\le X$ $(1\le i\le s)$. Then it follows from \cite[Theorem 5.2]{SW2010} 
that whenever $d_1\cdots d_k\ge (2s-k)^{4s-2k}$, one has 
\[
M_{\bfd,s}(X)=s!X^s+O(X^{s-1/2}).
\]

\par The proof of Theorem \ref{theorem1.1}, in common with our earlier treatment in 
\cite{VW1997} of the Vinogradov system (\ref{1.1}), is based on the application of 
multiplicative polynomial identities amongst variables in pursuit of parametrisations that 
these days would be described as being of torsorial type. The key innovation of 
\cite{VW1997} was to relate not merely two product polynomials, but instead $r\ge 2$ such 
polynomials, leading to a decomposition of the variables into $(k+1)^r$ parameters. Large 
numbers of these parameters may be determined via divisor function estimates, and 
thereby one obtains powerful bounds for the difference $J_{k+1,k}(X)-T_{k+1}(X)$. In 
the present situation, the polynomial identities are more novel, and sacrifices must be made 
in order to bring an analogous plan to fruition. Nonetheless, when $d<k/2$, the kind of 
multiplicative relations of \cite{VW1997} may still be derived in a useful form.\par

This paper is organised as follows. We being in \S2 of this paper by deriving the polynomial 
identities required for our subsequent analysis. In \S3 we refine this infrastructure so that 
appropriate multiplicative relations are obtained involving few auxiliary variables. A 
complication for us here is the problem of bounding the number of choices for these 
auxiliary variables, since they are of no advantage to us in the ensuing analysis of 
multiplicative relations. In \S4, we exploit the multiplicative relations by extracting common 
divisors between tuples of variables, following the path laid down in our earlier work 
\cite{VW1997} joint with Vaughan. This leads to the proof of Theorem \ref{theorem1.1}. 
Finally, in \S5, we discuss the corollaries to Theorem \ref{theorem1.1} and consider also 
refinements and potential generalisations of our main results.\par

Our basic parameter is $X$, a sufficiently large positive number. Whenever $\eps$ appears 
in a statement, either implicitly or explicitly, we assert that the statement holds for each 
$\eps>0$. In this paper, implicit constants in Vinogradov's notation $\ll$ and $\gg$ may 
depend on $\eps$, $k$, and $s$. We make frequent use of vector notation in the form 
$\bfx=(x_1,\ldots,x_r)$. Here, the dimension $r$ depends on the course of the argument. 
We also write $(a_1,\ldots ,a_s)$ for the greatest common divisor of the integers 
$a_1,\ldots ,a_s$. Any ambiguity between ordered $s$-tuples and corresponding 
greatest common divisors will be easily resolved by context. Finally, as usual, we write 
$e(z)$ for $e^{2\pi iz}$.\par
 
\noindent {\bf Acknowledgements:} The author's work is supported by NSF grant 
DMS-2001549 and the Focused Research Group grant DMS-1854398. 

\section{Polynomial identities} We begin by introducing the power sum polynomials
\[
s_j(\bfz)=z_1^j+\ldots +z_k^j\quad (1\le j\le k).
\]
On recalling (\ref{1.5}), we see that $I_{k,d}(X)$ counts the number of integral solutions of 
the system of equations
\begin{equation}\label{2.1}
\left.
\begin{aligned}s_j(\bfx)&=s_j(\bfy)\quad (1\le j\le k,\, j\ne k-d)\\
s_{k-d}(\bfx)&=s_{k-d}(\bfy)+h,
\end{aligned}
\right\}
\end{equation}
with $1\le \bfx,\bfy\le X$ and $|h|\le kX^{k-d}$. Our first task is to reinterpret this system 
in terms of elementary symmetric polynomials, so that our first multiplicative relations may 
be extracted.\par

The elementary symmetric polynomials $\sig_j(\bfz)\in \dbZ[z_1,\ldots ,z_k]$ may be 
defined by means of the generating function identity
\[
1+\sum_{j=1}^k\sig_j(\bfz)(-t)^j=\prod_{i=1}^k(1-tz_i).
\]
Since
\[
\sum_{i=1}^k\log (1-tz_i)=-\sum_{j=1}^\infty s_j(\bfz)\frac{t^j}{j},
\]
we deduce that
\[
1+\sum_{j=1}^k\sig_j(\bfz)(-t)^j=\exp \biggl( -\sum_{j=1}^\infty s_j(\bfz)
\frac{t^j}{j}\biggr) .
\]
When $n\ge 1$, the formula
\begin{equation}\label{2.2}
\sig_n(\bfz)=(-1)^n\sum_{\substack{m_1+2m_2+\ldots +nm_n=n\\ m_i\ge 0}}\, 
\prod_{i=1}^n \frac{(-s_i(\bfz))^{m_i}}{i^{m_i}m_i!}
\end{equation}
then follows via an application of Fa\`a di Bruno's formula. By convention, we put 
$\sig_0(\bfz)=1$. We refer the reader to \cite[equation (2.14$^\prime$)]{MacD1979} for a 
self-contained account of the relation (\ref{2.2}).\par

Suppose now that $0\le d<k/2$, and that the integers $\bfx,\bfy,h$ satisfy (\ref{2.1}). 
When $1\le n<k-d$, it follows from (\ref{2.2}) that
\begin{equation}\label{2.3}
\sig_n(\bfx)=(-1)^n\sum_{\substack{m_1+2m_2+\ldots +nm_n=n\\ m_i\ge 0}}\, 
\prod_{i=1}^n \frac{(-s_i(\bfy))^{m_i}}{i^{m_i}m_i!}=\sig_n(\bfy).
\end{equation}
When $k-d\le n\le k$, on the other hand, we instead obtain the relation
\[
\sig_n(\bfx)=(-1)^n\sum_{\substack{m_1+2m_2+\ldots +nm_n=n\\ m_i\ge 0}}\, 
\frac{(-s_{k-d}(\bfy)-h)^{m_{k-d}}}{(k-d)^{m_{k-d}}m_{k-d}!}
\prod_{\substack{1\le i\le n\\ i\ne k-d}}\frac{(-s_i(\bfy))^{m_i}}{i^{m_i}m_i!} .
\]
Since $d<k/2$, the summation condition on $\bfm$ ensures that $m_{k-d}\in \{0,1\}$. 
Thus, by isolating the term in which $m_{k-d}=1$, we see that
\begin{equation}\label{2.4}
\sig_n(\bfx)=\sig_n(\bfy)+h\psi_n(\bfy),
\end{equation}
where by (\ref{2.2}),
\begin{align*}
\psi_n(\bfy)&=\frac{(-1)^{n+1}}{k-d}\sum_{\substack{m_1+2m_2+\ldots +
(n-k+d)m_{n-k+d}=n-k+d\\ m_i\ge 0}}\prod_{i=1}^{n-k+d}
\frac{(-s_i(\bfy))^{m_i}}{i^{m_i}m_i!}\\
&=\frac{(-1)^{k-d+1}}{k-d}\sig_{n-k+d}(\bfy).
\end{align*}

\par We deduce from (\ref{2.3}) and (\ref{2.4}) that
\begin{align}
\prod_{i=1}^k(t-x_i)-\prod_{i=1}^k(t-y_i)&=(-1)^k\sum_{n=0}^k(\sig_n(\bfx)-\sig_n(\bfy))
(-t)^{k-n}\notag \\
&=(-1)^{d-1}\frac{h}{k-d}\sum_{m=0}^d\sig_m(\bfy)(-t)^{d-m}.\label{2.5}
\end{align}
Define the polynomial
\begin{equation}\label{2.6}
\tau_d(\bfy;w)=(-1)^{d-1}\sum_{m=0}^d\sig_m(\bfy)(-w)^{d-m}.
\end{equation}
Then we deduce from (\ref{2.5}) that for $1\le j\le k$, one has the relation
\begin{equation}\label{2.7}
(k-d)\prod_{i=1}^k(y_j-x_i)=\tau_d(\bfy;y_j)h.
\end{equation}

\par By comparing the relation (\ref{2.7}) with $j=s$ and $j=t$ for two distinct indices $s$ 
and $t$ satisfying $1\le s<t\le k$, it is apparent that
\begin{equation}\label{2.8}
\tau_d(\bfy;y_t)\prod_{i=1}^k(y_s-x_i)=\tau_d(\bfy;y_s)\prod_{i=1}^k(y_t-x_i).
\end{equation}
Furthermore, by applying the relations (\ref{2.3}), we see that $\sig_m(\bfy)=\sig_m(\bfx)$ 
for $1\le m\le d$, and thus it is a consequence of (\ref{2.6}) that
\begin{equation}\label{2.9}
\tau_d(\bfy;y_j)=\tau_d(\bfx;y_j)\quad (1\le j\le k).
\end{equation}
We therefore deduce from (\ref{2.8}) that for $1\le s<t\le k$, one has
\begin{equation}\label{2.10}
\tau_d(\bfx;y_t)\prod_{i=1}^k(y_s-x_i)=\tau_d(\bfx;y_s)\prod_{i=1}^k(y_t-x_i).
\end{equation}
These are the multiplicative relations that provide the foundation for our analysis. One 
additional detail shall detain us temporarily, however, for to be useful we must ensure that 
all of the factors on left and right hand sides of (\ref{2.8}) and (\ref{2.10}) are non-zero.

\par Suppose temporarily that there are indices $l$ and $m$ with $1\le l,m\le k$ for which 
$x_l=y_m$. By relabelling variables, if necessary, we may suppose that $l=m=k$, and then 
it follows from (\ref{2.1}) that
\[
x_1^j+\ldots +x_{k-1}^j=y_1^j+\ldots +y_{k-1}^j\quad (1\le j\le k,\, j\ne k-d).
\]
There are $k-1$ equations here in $k-1$ pairs of variables $x_i,y_i$, and thus it follows 
from \cite{Ste1971} that $(x_1,\ldots ,x_{k-1})$ is a permutation of $(y_1,\ldots ,y_{k-1})$. 
We may therefore conclude that in the situation contemplated at the beginning of this 
paragraph, the solution $\bfx,\bfy$ of (\ref{2.1}) is counted by $T_k(X)$, with 
$(x_1,\ldots ,x_k)$ a permutation of $(y_1,\ldots ,y_k)$. In particular, in any solution 
$\bfx,\bfy$ of (\ref{2.1}) counted by $I_{k,d}(X)-T_k(X)$, it follows that $x_l=y_m$ for no 
indices $l$ and $m$ satisfying $1\le l,m\le k$. In view of (\ref{2.7}) and (\ref{2.9}), such 
solutions also satisfy the conditions
\begin{equation}\label{2.11}
h\ne 0\quad \text{and}\quad \tau_d(\bfy;y_j)=\tau_d(\bfx;y_j)\ne 0\quad (1\le j\le k).
\end{equation}

\par We summarise the deliberations of this section in the form of a lemma.

\begin{lemma}\label{lemma2.1} Suppose that $\bfx,\bfy$ is a solution of the Diophantine 
system (\ref{2.1}) counted by $I_{k,d}(X)-T_k(X)$. Then the relations (\ref{2.8}),
(\ref{2.10}) and (\ref{2.11}) hold.
\end{lemma}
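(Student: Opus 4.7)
The plan is to simply organize the derivations preceding the lemma into a coherent argument, since Lemma~\ref{lemma2.1} is really a summary of the work already done. I would begin by starting from (\ref{2.1}) and invoking the Fa\`a di Bruno expansion (\ref{2.2}) to express each elementary symmetric polynomial $\sig_n(\bfx)$ in terms of the power sums $s_i(\bfx)$. Since the equations in (\ref{2.1}) force $s_i(\bfx)=s_i(\bfy)$ for $i\ne k-d$, and $s_{k-d}(\bfx)=s_{k-d}(\bfy)+h$, the key is to track how $h$ enters when we pass from power sums to elementary symmetric polynomials.

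Next, I would split the range of $n$ into two cases. For $1\le n<k-d$, the index $k-d$ does not appear in any summand of (\ref{2.2}), so we directly get $\sig_n(\bfx)=\sig_n(\bfy)$, which is (\ref{2.3}). For $k-d\le n\le k$, the hypothesis $d<k/2$ is exactly what ensures that $2(k-d)>k\ge n$, so the partition index $m_{k-d}$ appearing in (\ref{2.2}) can only take the values $0$ or $1$. Isolating the $m_{k-d}=1$ contribution yields the linear-in-$h$ relation (\ref{2.4}) and identifies the correction term $\psi_n(\bfy)$ with a multiple of $\sig_{n-k+d}(\bfy)$.

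Having pinned down $\sig_n(\bfx)-\sig_n(\bfy)$ for every $n$, I would expand the difference of monic polynomials $\prod_{i=1}^k(t-x_i)-\prod_{i=1}^k(t-y_i)$ via its elementary symmetric polynomial coefficients. Collecting the result into the compact form (\ref{2.5}) and packaging the right-hand side as $\tau_d(\bfy;t)h$ via the definition (\ref{2.6}) yields the key identity (\ref{2.7}) upon specialising $t=y_j$. Cross-multiplying (\ref{2.7}) for two indices $s\ne t$ removes $h$ and gives (\ref{2.8}). Invoking (\ref{2.3}) to identify $\sig_m(\bfy)$ with $\sig_m(\bfx)$ for $0\le m\le d$ gives (\ref{2.9}) directly from (\ref{2.6}), and substituting into (\ref{2.8}) produces (\ref{2.10}).

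The subtlest step is verifying (\ref{2.11}), i.e.\ that no factor on either side vanishes for a solution counted by $I_{k,d}(X)-T_k(X)$. Here I would argue by contradiction: if $x_l=y_m$ for some $l,m$, I can relabel so that $l=m=k$, cancel the common variable from (\ref{2.1}), and obtain a system of $k-1$ equations (those indexed by $j\ne k-d$, plus the degree $k-d$ equation which then reads $s_{k-d}(\bfx')=s_{k-d}(\bfy')+h$ in $k-1$ variables). Actually, because we still have the spurious equation with $h$, one needs to argue slightly more carefully: after cancelling the common variable one retains $k-1$ equations in $k-1$ pairs of variables for $j\ne k-d$, and invoking the Newton--Steinitz result \cite{Ste1971} for systems of power-sum equations in $k-1$ variables with degrees $1,2,\ldots,k-1$ (available because $d\ge 0$, so the set $\{1,\ldots,k\}\setminus\{k-d\}$ contains $\{1,\ldots,k-1\}$ when $d=0$, and otherwise $k$ is in the remaining set and we use any $k-1$-element subset containing $1,\ldots,k-1$) forces $(x_1,\ldots,x_{k-1})$ to be a permutation of $(y_1,\ldots,y_{k-1})$. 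This places the solution in $T_k(X)$, contradicting the hypothesis. Hence $x_l\ne y_m$ for all $l,m$, and combining this with (\ref{2.7}) and (\ref{2.9}) yields (\ref{2.11}). The main obstacle is this Steinitz step, since one must extract the correct subset of equations from (\ref{2.1}) to apply the rigidity result; everything else is formal manipulation of the identities already recorded.
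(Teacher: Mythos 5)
Your proposal follows the paper's own derivation essentially step for step: the Fa\`a di Bruno expansion (\ref{2.2}), the case split at $n=k-d$ with $d<k/2$ forcing $m_{k-d}\in\{0,1\}$, the passage to (\ref{2.5})--(\ref{2.7}), the cross-multiplication eliminating $h$ to get (\ref{2.8}), the use of (\ref{2.3}) to obtain (\ref{2.9}) and hence (\ref{2.10}), and the reduction of (\ref{2.11}) to the case $x_l=y_m$ handled by cancellation and \cite{Ste1971}. This is correct and is the same argument.

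One point of your justification is garbled, though the conclusion survives. In the Steinig step you claim that for $d\ge 1$ one can find a $(k-1)$-element subset of $\{1,\ldots,k\}\setminus\{k-d\}$ containing $\{1,\ldots,k-1\}$; this is false, since that set is missing the exponent $k-d\le k-1$. No such subset is needed: Steinig's theorem applies to equal sums of like powers for \emph{any} $k-1$ distinct positive exponents in $k-1$ pairs of positive integer variables, not only for the consecutive exponents $1,\ldots,k-1$. That is precisely how the paper invokes \cite{Ste1971} here and again in the proof of Lemma \ref{lemma3.1}, where the exponent set is likewise $\{1,\ldots,k\}\setminus\{k-d\}$. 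With that correction your argument for (\ref{2.11}) is exactly the paper's: $x_l\ne y_m$ for all $l,m$ forces $\prod_{i}(y_j-x_i)\ne 0$, whence by (\ref{2.7}) both $h\ne 0$ and $\tau_d(\bfy;y_j)\ne 0$, and (\ref{2.9}) completes the claim.
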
 

\section{Reduction to efficient multiplicative relations} We seek to estimate the number 
$I_{k,d}(X)-T_k(X)$ of solutions of the system (\ref{2.1}), with $1\le \bfx,\bfy\le X$ and 
$|h|\le kX^{k-d}$, for which $(x_1,\ldots ,x_k)$ is not a permutation of $(y_1,\ldots ,y_k)$. 
We divide these solutions into two types according to a parameter $r$ with $1<r\le k$. Let 
$V_{1,r}(X)$ denote the number of such solutions in which there are fewer than $r$ distinct 
values amongst $x_1,\ldots ,x_k$, and likewise fewer than $r$ distinct values amongst 
$y_1,\ldots ,y_k$. Also, let $V_{2,r}(X)$ denote the corresponding number of solutions in 
which there are either at least $r$ distinct values amongst $x_1,\ldots ,x_k$, or at least $r$ 
distinct values amongst $y_1,\ldots ,y_k$. Then one has
\begin{equation}\label{3.1}
I_{k,d}(X)-T_k(X)=V_{1,r}(X)+V_{2,r}(X).
\end{equation}

\par The solutions counted by $V_{1,r}(X)$ are easily handled via an expedient argument of 
circle method flavour.

\begin{lemma}\label{lemma3.1} One has $V_{1,r}(X)\ll X^{r-1}$.
\end{lemma}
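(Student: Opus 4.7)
The plan is to stratify the solutions counted by $V_{1,r}(X)$ according to the multiplicity profiles of $\bfx$ and $\bfy$, and in each stratum to reduce to a polynomial system in few variables that is pinned down by a B\'ezout-type count. Write the distinct values of $\bfx$ as $u_1,\ldots,u_a$ with multiplicities $\mu_1,\ldots,\mu_a$, and those of $\bfy$ as $v_1,\ldots,v_b$ with multiplicities $\nu_1,\ldots,\nu_b$, so that $a,b\le r-1$ and $\mu_1+\ldots+\mu_a=\nu_1+\ldots+\nu_b=k$. There are only $O_k(1)$ admissible shape pairs, and within each shape the multiset data determines the ordered tuples $\bfx$ and $\bfy$ up to $O_k(1)$ orderings.

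Within a fixed shape, I first select an ordered $b$-tuple $\bfv=(v_1,\ldots,v_b)\in[1,X]^b$ of distinct integers; this contributes at most $X^{r-1}$ choices. The system (\ref{2.1}) for $j\ne k-d$ becomes
\[
\sum_{i=1}^{a}\mu_iu_i^j=C_j\qquad (1\le j\le k,\,j\ne k-d),
\]
where the $C_j=\sum_{i=1}^b\nu_iv_i^j$ are now known. Since $a\le r-1\le k-1$, I may choose $a$ distinct exponents $j_1<j_2<\ldots<j_a$ from $\{1,\ldots,k\}\setminus\{k-d\}$ and restrict attention to the resulting square subsystem in $u_1,\ldots,u_a$. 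Its Jacobian is
\[
\Bigl(\prod_{l=1}^{a}j_l\Bigr)\Bigl(\prod_{i=1}^{a}\mu_i\Bigr)\det\bigl(u_i^{j_l-1}\bigr)_{l,i},
\]
in which the generalized Vandermonde determinant is non-zero whenever the $u_i$ are distinct positive reals. Hence any integer solution with distinct positive coordinates is isolated in the complex solution variety of the subsystem, and B\'ezout's theorem bounds the total number of such isolated points by $\prod_l j_l\le k!$. Summing over $\bfv$, over orderings, and over shapes then gives $V_{1,r}(X)\ll X^{r-1}$.

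The main obstacle is the possibility that the square subsystem has positive-dimensional components in its complex solution variety, in which case B\'ezout does not apply naively. The Jacobian calculation however confines every such component to a diagonal locus $\{u_i=u_{i'}\}$ for some $i\ne i'$; integer points with all $u_i$ distinct are therefore forced off every positive-dimensional component and are isolated, so B\'ezout applies as above. Integer points with some coincidence $u_i=u_{i'}$ correspond to shapes with strictly fewer than $a$ distinct values and are taken care of in a different stratum of the decomposition, so no solution is missed. The circle-method flavour of the argument lies in packaging each side of (\ref{2.1}) via power-sum data stratified by shape; the quantitative heart is B\'ezout combined with the non-vanishing of the generalized Vandermonde determinant.
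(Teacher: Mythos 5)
Your argument is correct, but it takes a genuinely different route from the paper. The paper's proof is of circle-method flavour: it encodes the repeated values through dilated exponential sums $f(a_i\bfalp)$, $f(-b_i\bfalp)$ over the $k-1$ retained degrees, applies H\"older's inequality to replace the resulting mean value by $\int_{[0,1)^{k-1}}|f(\bfalp)|^{2r-2}\d\bfalp$, and then notes by orthogonality that this integral counts solutions of the symmetric system $x_1^j+\ldots+x_{r-1}^j=y_1^j+\ldots+y_{r-1}^j$ $(1\le j\le k,\ j\ne k-d)$ in $r-1\le k-1$ pairs of variables; Steinig's theorem \cite{Ste1971} forces all such solutions to be diagonal, giving the bound $T_{r-1}(X)\ll X^{r-1}$. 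You instead stratify by multiplicity profile, spend the factor $X^{r-1}$ on the $\bfy$-side data, and pin down the $\bfx$-side by a square power-sum subsystem, using the positivity of the generalized Vandermonde determinant at distinct positive arguments together with the refined B\'ezout inequality bounding the number of isolated points of an affine complete-intersection-type system by the product of the degrees. The paper's route outsources the algebraic content to a known theorem at the cost of the H\"older symmetrization (which conveniently erases the multiplicities); yours keeps the multiplicities and is self-contained modulo refined B\'ezout. One small imprecision in your final paragraph: a positive-dimensional component of the subsystem need not be confined to a diagonal locus $\{u_i=u_{i'}\}$, since the generalized Vandermonde determinant can vanish at distinct complex or negative arguments. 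This does not affect your proof, because the only fact you use is that the Jacobian is invertible at every point with distinct \emph{positive} coordinates, so that every integer point in the stratum is isolated and the refined B\'ezout count applies to it.
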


\begin{proof} It is convenient to introduce the exponential sum
\[
f(\bfalp)=\sum_{1\le x\le X}e\biggl( \sum_{\substack{1\le j\le k\\ j\ne k-d}}\alp_jx^j
\biggr) .
\]
In a typical solution $\bfx,\bfy$ of (\ref{2.1}) counted by $V_{1,r}(X)$, we may relabel 
indices in such a manner that $x_j\in \{x_1,\ldots ,x_{r-1}\}$ for $1\le j\le k$, and 
likewise $y_j\in \{y_1,\ldots ,y_{r-1}\}$ for $1\le j\le k$. On absorbing combinatorial factors 
into the constant implicit in the notation of Vinogradov, therefore, we discern via 
orthogonality that there are integers $a_i,b_i$ $(1\le i\le r-1)$, with $1\le a_i,b_i\le k$, for 
which one has
\[
V_{1,r}(X)\ll \int_{[0,1)^{k-1}}\Biggl( \prod_{i=1}^{r-1}f(a_i\bfalp)f(-b_i\bfalp)\Biggr) 
\d\bfalp .
\]

An application of H\"older's inequality shows that
\[
V_{1,r}(X)\ll \prod_{i=1}^{r-1}I(a_i)^{1/(2r-2)}I(b_i)^{1/(2r-2)},
\]
where we write
\[
I(c)=\int_{[0,1)^{k-1}}|f(c\bfalp)|^{2r-2}\d\bfalp .
\]
Thus, by making a change of variables, we discern that
\[
V_{1,r}(X)\ll \int_{[0,1)^{k-1}}|f(\bfalp)|^{2r-2}\d\bfalp .
\]
By orthogonality, the latter mean value counts the integral solutions of the system
\[
x_1^j+\ldots +x_{r-1}^j=y_1^j+\ldots +y_{r-1}^j\quad (1\le j\le k,\, j\ne k-d),
\]
with $1\le \bfx,\bfy\le X$. Since the number of equations here is $k-1$, and the number of 
pairs of variables is $r-1\le k-1$, it follows from \cite{Ste1971} that $(x_1,\ldots ,x_{r-1})$ 
is a permutation of $(y_1,\ldots ,y_{r-1})$, and hence we deduce that
\[
V_{1,r}(X)\ll T_{r-1}(X)\sim (r-1)!X^{r-1}.
\]
This establishes the upper bound claimed in the statement of the lemma.
\end{proof}

We next consider the solutions $\bfx,\bfy,h$ of the system (\ref{2.1}) counted by 
$V_{2,r}(X)$. Here, by taking advantage of the symmetry between $\bfx$ and $\bfy$, and 
if necessary relabelling indices, we may suppose that $y_1,\ldots ,y_r$ are distinct. Suppose 
temporarily that the integers $y_t$ and $x_i-y_t$ have been determined for $1\le i\le k$ and 
$1\le t\le r$. It follows that $y_t$ and $x_i$ are determined for $1\le i\le k$ and 
$1\le t\le r$, and hence also that the coefficients $\sig_m(\bfx)$ of the polynomial 
$\tau_d(\bfx;w)$ are fixed for $0\le m\le d$. The integers $y_s$ for $r<s\le k$ 
may consequently be determined from the polynomial equations (\ref{2.10}) with $t=1$. 
Here, it is useful to observe that with $y_1$ and $x_1,\ldots ,x_k$ already fixed, and all the 
factors on the left and right hand side of (\ref{2.10}) non-zero, the equation (\ref{2.10}) 
becomes a polynomial in the single variable $y_s$. On the left hand side one has a 
polynomial of degree $k$, whilst on the right hand side the polynomial has degree 
$d=\text{deg}_y(\tau_d(\bfx;y))<k$. Thus $y_s$ is determined by a polynomial of degree 
$k$ to which there are at most $k$ solutions. Given fixed choices for $y_t$ and $x_i-y_t$ for 
$1\le i\le k$ and $1\le t\le r$, therefore, there are $O(1)$ possible choices for 
$y_{r+1},\ldots ,y_k$.\par

Let $M_r(X;\bfy)$ denote the number of integral solutions $\bfx$ of the system of equations 
(\ref{2.10}) $(1\le s<t\le r)$, satisfying $1\le \bfx\le X$, wherein $\bfy=(y_1,\ldots ,y_r)$ is 
fixed with $1\le \bfy\le X$ and satisfies (\ref{2.11}). Then it follows from the above 
discussion in combination with Lemma \ref{lemma2.1} that
\begin{equation}\label{3.2}
V_{2,r}(X)\ll X^r\max_{\bfy}M_r(X;\bfy),
\end{equation}
in which the maximum is taken over distinct $y_1,\ldots ,y_r$ with $1\le \bfy\le X$.\par

Consider fixed values of $y_1,\ldots ,y_r$ with $1\le y_i\le X$ $(1\le i\le r)$. We write 
$N_r(X;\bfy)$ for the number of $r$-tuples
\begin{equation}\label{3.3}
(\tau_d(y_1,\ldots ,y_k;y_1),\ldots ,\tau_d(y_1,\ldots ,y_k;y_r)),
\end{equation}
with $1\le y_j\le X$ $(r<j\le k)$. It is apparent from (\ref{2.6}) and (\ref{2.11}) that in 
each such $r$-tuple, one has
\begin{equation}\label{3.4}
1\le |\tau_d(\bfy;y_j)|\ll X^d,
\end{equation}
and thus a trivial estimate yields the bound
\begin{equation}\label{3.5}
N_r(X;\bfy)\ll X^{rd}.
\end{equation}
On the other hand, we may consider the number of $d$-tuples
\[
(\sig_1(y_1,\ldots ,y_k),\ldots ,\sig_d(y_1,\ldots ,y_k)),
\]
with $1\le y_j\le X$ $(1\le j\le k)$. Since $|\sig_m(\bfy)|\ll X^m$ $(1\le m\le d)$, the 
number of such $d$-tuples is plainly $O(X^{d(d+1)/2})$. Recall that $\sig_0(\bfy)=1$. Then 
for each fixed choice of this $d$-tuple, and for each fixed index $j$, it follows from 
(\ref{2.6}) that the value of $\tau_d(y_1,\ldots,y_k;y_j)$ is determined. We therefore infer 
that
\begin{equation}\label{3.6}
N_r(X;\bfy)\ll X^{d(d+1)/2}.
\end{equation}
These simple estimates are already sufficient for many purposes. However, by working 
harder, one may obtain an estimate that is oftentimes superior to both (\ref{3.5}) and 
(\ref{3.6}). This we establish in Lemma \ref{lemma3.3} below. For the time being we 
choose not to interrupt our main narrative, and instead explain how bounds for 
$N_r(X;\bfy)$ may be applied to estimate $V_{2,r}(X)$.\par

When $1\le j\le r$, we substitute
\begin{equation}\label{3.7}
u_{0j}=\tau_d(\bfx;y_j)^{-1}\prod_{i=1}^r\tau_d(\bfx;y_i).
\end{equation}
Observe that there are at most $N_r(X;\bfy)$ distinct values for the integral $r$-tuple 
$(u_{01},\ldots ,u_{0r})$. Moreover, in any such $r$-tuple it follows from (\ref{3.4}) that 
$1\le |u_{0j}|\ll X^{d(r-1)}$. There is consequently a positive integer $C=C(k)$ with the 
property that, in any solution $\bfx,\bfy$ counted by $M_r(X;\bfy)$, one has 
$1\le |u_{0j}|\le CX^{d(r-1)}$.\par

Next we substitute
\[
u_{ij}=x_i-y_j\quad (1\le i\le k,\, 1\le j\le r).
\]
Then from (\ref{2.10}) we see that $M_r(X;\bfy)$ is bounded above by the number of 
integral solutions of the system
\begin{equation}\label{3.8}
\prod_{i_1=0}^ku_{i_11}=\prod_{i_2=0}^ku_{i_22}=\ldots =\prod_{i_r=0}^ku_{i_rr},
\end{equation}
with
\begin{equation}\label{3.9}
y_1+u_{i1}=y_2+u_{i2}=\ldots =y_r+u_{ir}\quad (1\le i\le k),
\end{equation}
\begin{equation}\label{3.10}
1\le |u_{ij}|\le X\quad (1\le i\le k,\, 1\le j\le r),
\end{equation}
and with $u_{0j}$ given by (\ref{3.7}) for $1\le j\le r$. Denote by $W(X;\bfy,\bfu_0)$ the 
number of integral solutions of the system (\ref{3.8}) subject to (\ref{3.9}) and 
(\ref{3.10}). Then on recalling (\ref{3.2}), we may summarise our deliberations thus far 
concerning $V_{2,r}(X)$ as follows.

\begin{lemma}\label{lemma3.2} One has
\[
V_{2,r}(X)\ll X^r\max_\bfy \left( N_r(X;\bfy)\max_{\bfu_0}W(X;\bfy,\bfu_0)\right) ,
\]
where the maximum with respect to $\bfy=(y_1,\ldots ,y_r)$ is taken over $y_1,\ldots ,y_r$ 
distinct with $1\le y_j\le X$ $(1\le j\le r)$, and the maximum over $r$-tuples 
$\bfu_0=(u_{01},\ldots ,u_{0r})$ is taken over
\[
1\le |u_{0j}|\le CX^{d(r-1)}\quad (1\le j\le r).
\]
\end{lemma}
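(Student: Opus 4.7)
The statement is essentially a bookkeeping consolidation of bounds assembled in the preceding discussion, so the plan is to combine three previously established observations in sequence.

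First, I would invoke the bound (\ref{3.2}), namely $V_{2,r}(X)\ll X^r\max_{\bfy}M_r(X;\bfy)$, where the maximum is taken over distinct $y_1,\ldots,y_r\in[1,X]$. The factor $X^r$ accounts for the free choice of these $r$ values, while the $O(1)$ multiplicity from reconstructing $y_{r+1},\ldots,y_k$ — via (\ref{2.10}) with $t=1$, which for each $s>r$ becomes a polynomial equation of degree $k$ in the single unknown $y_s$ — is absorbed into the implicit constant.

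Second, I would analyse $M_r(X;\bfy)$ via the change of variables $u_{ij}=x_i-y_j$ together with the $u_{0j}$ defined by (\ref{3.7}). The non-vanishing in (\ref{2.11}) ensures each $u_{0j}$ is a well-defined integer, since $\tau_d(\bfx;y_j)$ divides $\prod_{i=1}^r\tau_d(\bfx;y_i)$. Under this substitution, the multiplicative relations (\ref{2.10}) become exactly the chain (\ref{3.8}); the definitions $u_{ij}=x_i-y_j$ force the compatibility conditions (\ref{3.9}); and the size constraint (\ref{3.10}) is inherited from $1\le x_i,y_j\le X$. Then I would stratify by fixing $\bfu_0$: using (\ref{2.9}), $\tau_d(\bfx;y_j)=\tau_d(\bfy;y_j)$, so (\ref{3.7}) exhibits $\bfu_0$ as a function of the tuple (\ref{3.3}), whence the number of admissible $\bfu_0$ is at most $N_r(X;\bfy)$, with coordinates constrained to $1\le|u_{0j}|\le CX^{d(r-1)}$ by (\ref{3.4}). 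For each such $\bfu_0$ the number of integer completions $(u_{ij})$ is at most $W(X;\bfy,\bfu_0)$ by definition, giving $M_r(X;\bfy)\le N_r(X;\bfy)\max_{\bfu_0}W(X;\bfy,\bfu_0)$; substituting into the first step yields the lemma.

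The main obstacle — really a matter of care rather than difficulty — is verifying that the passage from $\bfx$ to $(\bfu_0,(u_{ij}))$ is an injection onto a set described completely by (\ref{3.8})–(\ref{3.10}), so that no additional constraint is silently imposed or forgotten, and that the stratification by $\bfu_0$ genuinely decouples the count in the claimed way. Once this has been checked, the proof is a compact repackaging of material already in hand from Lemma \ref{lemma2.1} and the inequality (\ref{3.2}).
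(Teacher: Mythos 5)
Your proposal is correct and takes essentially the same route as the paper: the paper likewise starts from the bound (\ref{3.2}), passes to the substitutions (\ref{3.7}) and $u_{ij}=x_i-y_j$ so that (\ref{2.10}) becomes the system (\ref{3.8})--(\ref{3.10}), uses (\ref{2.9}) and the definition of $N_r(X;\bfy)$ to see that $\bfu_0$ takes at most $N_r(X;\bfy)$ values with $1\le |u_{0j}|\le CX^{d(r-1)}$, and bounds the remaining count by $W(X;\bfy,\bfu_0)$. Nothing is missing.
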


Before fulfilling our commitment to establish an estimate for $N_r(X;\bfy)$ sharper than the 
pedestrian bounds already obtained, we introduce the exponent
\begin{equation}\label{3.11}
\tet_{d,r}=\sum_{l=1}^r\max \{ d-l+1,0\}.
\end{equation}

\begin{lemma}\label{lemma3.3} Let $d$ and $r$ be non-negative integers and let $C\ge 1$ 
be fixed. Also, let
\[
\calA_d=\{ (a_0,a_1,\ldots ,a_d)\in \dbZ^{d+1}:\text{$|a_l|\le CX^{d-l}$ $(0\le l\le d)$}\} .
\] 
Finally, when $\bfa\in \calA_d$, define
\[
f_\bfa(t)=a_0+a_1t+\ldots +a_dt^d.
\]
Suppose that $y_1,\ldots ,y_r$ are fixed integers with $1\le y_i\le X$ $(1\le i\le r)$. Then one 
has
\[
\text{card}\{ f_\bfa(y_i):\text{$\bfa\in \calA_d$ and $1\le i\le r$}\}\ll X^{\tet_{d,r}}.
\]
\end{lemma}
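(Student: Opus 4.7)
The plan is to parametrise each $r$-tuple $(f_\bfa(y_1),\ldots,f_\bfa(y_r))$ efficiently via Newton's divided differences, reducing the count to a product of one-dimensional integer range estimates. For each $l$ with $1\le l\le r$, I introduce
\[
D_l=f_\bfa[y_1,\ldots,y_l],
\]
the $l$-th divided difference of $f_\bfa$ at the nodes $y_1,\ldots,y_l$. The pivotal identity is that the divided difference of the monomial $t^j$ is the complete homogeneous symmetric polynomial, namely $t^j[y_1,\ldots,y_l]=h_{j-l+1}(y_1,\ldots,y_l)$, interpreted as $0$ when $j<l-1$. Linearity then yields
\[
D_l=\sum_{j=l-1}^d a_j\, h_{j-l+1}(y_1,\ldots,y_l),
\]
from which $D_l\in\dbZ$. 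Combining $|a_j|\le CX^{d-j}$ with $|h_m(y_1,\ldots,y_l)|\ll X^m$ gives $|D_l|\ll X^{d-l+1}$ when $l\le d+1$, and $D_l=0$ when $l\ge d+2$; in either case the number of admissible integer values for $D_l$ is $\ll X^{\max\{d-l+1,0\}}$.

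Next I would appeal to the Newton interpolation formula
\[
f_\bfa(y_l)=\sum_{m=1}^l D_m\prod_{i=1}^{m-1}(y_l-y_i)\qquad (1\le l\le r),
\]
which realises the passage $(D_1,\ldots,D_r)\mapsto(f_\bfa(y_1),\ldots,f_\bfa(y_r))$ as a triangular linear map, invertible whenever the $y_i$ are distinct. Consequently the number of possible $r$-tuples $(f_\bfa(y_1),\ldots,f_\bfa(y_r))$, as $\bfa$ ranges over $\calA_d$, is at most
\[
\prod_{l=1}^r X^{\max\{d-l+1,0\}}=X^{\tet_{d,r}},
\]
from which the claimed bound on $\text{card}\{f_\bfa(y_i):\bfa\in\calA_d,\,1\le i\le r\}$ follows (with the natural interpretation as an $r$-tuple count indexed by $i$). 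If the $y_i$ are not all distinct, I would reduce to the distinct subfamily, which only decreases $\tet_{d,r}$.

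The main technical point is the identification of $D_l$ with an integer of controlled magnitude via the complete symmetric polynomial formula above, which simultaneously delivers integrality and the decisive saving of one factor of $X$ per additional node; once these are in hand, the rest is a routine triangular count. An equivalent inductive route proceeds via the residual identity $f_\bfa(y_l)-p_{l-1}(y_l)=D_l\prod_{i=1}^{l-1}(y_l-y_i)$, where $p_{l-1}$ denotes the Lagrange interpolant of $f_\bfa$ at $y_1,\ldots,y_{l-1}$: given the first $l-1$ values, $p_{l-1}$ is fixed, and the displayed identity shows that $f_\bfa(y_l)$ is parametrised by the single integer $D_l$, whose range is $\ll X^{\max\{d-l+1,0\}}$, furnishing the inductive step.
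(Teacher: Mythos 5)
Your argument is correct and is in essence the same as the paper's: the paper proceeds by induction on $d$, at each step passing to the first divided difference $g_{\bfa}(y_r,t)=(f_{\bfa}(y_r)-f_{\bfa}(t))/(y_r-t)$, whose coefficient vector lies in a dilated copy of $\calA_{d-1}$, so that unrolling the induction produces precisely your Newton divided-difference table, with the $l$-th order difference an integer taking $\ll X^{\max\{d-l+1,0\}}$ values and the tuple $(f_{\bfa}(y_1),\ldots,f_{\bfa}(y_r))$ recovered by the triangular (Newton) substitution. Your closed-form packaging via $t^j[y_1,\ldots,y_l]=h_{j-l+1}(y_1,\ldots,y_l)$ is a clean non-inductive rendering of the same idea, and your reading of the cardinality as an $r$-tuple count and your reduction to distinct nodes both match how the lemma is actually used.
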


\begin{proof} We proceed by induction on $d$. Note first that when $d=0$, the polynomials 
$f_\bfa(t)$ are necessarily constant with $|a_0|\le C$, and thus
\[
\text{card}\{ f_\bfa(y_i): \text{$\bfa\in \calA_0$ and $1\le i\le r$}\}\le (2C+1)^r\ll 1.
\]
Since $\tet_{0,r}=0$, the conclusion of the lemma follows for $d=0$. Observe also that 
when $r=0$ the conclusion of the lemma is trivial, for then one has $\tet_{d,0}=0$ and the 
set of values in question is empty.\par

Having established the base of the induction, we proceed under the assumption that the 
conclusion of the lemma holds whenever $d<D$, for some integer $D$ with $D\ge 1$. In 
view of the discussion of the previous paragraph, we may now restrict attention to the 
situation with $d=D\ge 1$ and $r\ge 1$. Since $1\le y_r\le X$ and $y_r$ is fixed, we see 
that whenever $\bfa\in \calA_D$ one has
\begin{equation}\label{3.12}
|f_\bfa(y_r)|\le |a_0|+|a_1|y_r+\ldots +|a_D|y_r^D\le (D+1)CX^D.
\end{equation}
Put
\begin{equation}\label{3.13}
g_\bfa(y_r,t)=\frac{f_\bfa(y_r)-f_\bfa(t)}{y_r-t},
\end{equation}
so that
\[
g_\bfa(y_r,t)=\sum_{l=1}^Da_l(t^{l-1}+t^{l-2}y_r+\ldots +y_r^{l-1}).
\]
Then one sees that whenever $\bfa\in \calA_D$, one may write
\begin{equation}\label{3.14}
g_\bfa(y_r,t)=F_\bfb(t),
\end{equation}
where
\[
F_\bfb(t)=b_0+b_1t+\ldots +b_{D-1}t^{D-1},
\]
and, for $0\le l\le D-1$, one has
\[
|b_l|\le |a_{l+1}|+|a_{l+2}|y_r+\ldots +|a_D|y_r^{D-l-1}\le CDX^{D-l-1}.
\] 
Put
\[
\calB_{D-1}=\{(b_0,b_1,\ldots ,b_{D-1})\in \dbZ^D:\text{$|b_l|\le CDX^{D-1-l}$ 
$(0\le l\le D-1)$}\}.
\]
Then the inductive hypothesis for $d=D-1$ implies that
\begin{equation}\label{3.15}
\text{card}\{ F_\bfb(y_i):\text{$\bfb\in \calB_{D-1}$ and $1\le i\le r-1$}\}
\ll X^{\tet_{D-1,r-1}}.
\end{equation}

\par On recalling (\ref{3.13}) and (\ref{3.14}), we see that
\[
f_\bfa(y_i)=f_\bfa(y_r)-(y_r-y_i)F_\bfb(y_i)\quad (1\le i\le r-1).
\]
The values of $y_i$ $(1\le i\le r-1)$ are fixed, and by (\ref{3.15}) there are 
$O(X^{\tet_{D-1,r-1}})$ possible choices for $F_\bfb(y_i)$ $(1\le i\le r-1)$. Then for each 
fixed choice of $f_\bfa(y_r)$, there are $O(X^{\tet_{D-1,r-1}})$ choices available for 
$f_\bfa(y_i)$ $(1\le i\le r-1)$. We therefore deduce from (\ref{3.12}) that
\[
\text{card}\{ f_\bfa(y_i):\text{$\bfa\in \calA_D$ and $1\le i\le r$}\}\ll 
X^D\cdot X^{\tet_{D-1,r-1}}.
\]
Since, from (\ref{3.11}), one has
\begin{align*}
\tet_{D-1,r-1}+D&=D+\sum_{l=1}^{r-1}\max \{ (D-1)-l+1,0\}\\
&=\sum_{l=1}^r\max \{ D-l+1,0\}=\tet_{D,r},
\end{align*}
we find that
\[
\text{card}\{ f_\bfa(y_i):\text{$\bfa\in \calA_D$ and $1\le i\le r$}\}\ll X^{\tet_{D,r}}.
\]
The inductive hypothesis therefore follows for $d=D$ and all values of $r$. The conclusion 
of the lemma consequently follows by induction.
\end{proof}

On recalling (\ref{2.6}), a brief perusal of (\ref{3.3}) and the definition of $N_r(X;\bfy)$ 
leads from Lemma \ref{lemma3.3} to the estimate $N_r(X;\bfy)\ll X^{\tet_{d,r}}$. We may 
therefore conclude this section with the following upper bound for $I_{k,d}(X)-T_k(X)$.

\begin{lemma}\label{lemma3.4} One has
\[
I_{k,d}(X)-T_k(X)\ll X^{r-1}+X^{r+\tet_{d,r}}\max_{\bfy ,\bfu_0}W(X;\bfy,\bfu_0),
\]
where the maximum is taken over distinct $y_1,\ldots ,y_r$ with $1\le y_j\le X$ and over
$1\le |u_{0j}|\le CX^{d(r-1)}$ $(1\le j\le r)$.
\end{lemma}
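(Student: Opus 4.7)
The plan is to assemble the three main results of this section---the decomposition (\ref{3.1}) together with Lemmata \ref{lemma3.1}, \ref{lemma3.2}, and \ref{lemma3.3}---into the desired inequality. My first move would be to invoke (\ref{3.1}) and split $I_{k,d}(X) - T_k(X) = V_{1,r}(X) + V_{2,r}(X)$. The contribution $X^{r-1}$ appearing in the target bound then arises immediately from Lemma \ref{lemma3.1}, which supplies $V_{1,r}(X) \ll X^{r-1}$.

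The substantive work is therefore to bound $V_{2,r}(X)$. Lemma \ref{lemma3.2} already provides the estimate
\[
V_{2,r}(X) \ll X^r\max_\bfy\bigl( N_r(X;\bfy)\max_{\bfu_0}W(X;\bfy,\bfu_0)\bigr),
\]
so what remains is to establish $N_r(X;\bfy)\ll X^{\tet_{d,r}}$. This is where Lemma \ref{lemma3.3} enters. From (\ref{2.6}), $\tau_d(\bfy;w)$ is a polynomial in $w$ of degree $d$ whose coefficient of $w^l$ is $(-1)^{l-1}\sig_{d-l}(\bfy)$, and hence has absolute value $O(X^{d-l})$ uniformly in the choice of $y_{r+1},\ldots,y_k$. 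As these latter variables range over $[1,X]^{k-r}$, each admissible coefficient vector lies in $\calA_d$ for a suitable fixed $C=C(k)$, so Lemma \ref{lemma3.3} governs the number of associated $r$-tuples $(\tau_d(\bfy;y_1),\ldots,\tau_d(\bfy;y_r))$ and delivers the required bound on $N_r(X;\bfy)$.

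Substituting $N_r(X;\bfy)\ll X^{\tet_{d,r}}$ back into the estimate for $V_{2,r}(X)$ and combining with Lemma \ref{lemma3.1} yields the conclusion, with the constraints on $\bfy$ and $\bfu_0$ inherited directly from Lemma \ref{lemma3.2}. The one delicate point I would flag is that Lemma \ref{lemma3.3} must be read as controlling the number of distinct $r$-tuples $(f_\bfa(y_1),\ldots,f_\bfa(y_r))$, rather than merely the cardinality of the union of values as $\bfa$ and $i$ jointly range; fortunately this stronger interpretation is exactly what the inductive step in that lemma produces, via the reduction $f_\bfa(y_i)=f_\bfa(y_r)-(y_r-y_i)F_\bfb(y_i)$. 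Beyond this interpretive remark, no genuine obstacle appears---the final step is a pure book-keeping exercise, since every combinatorial and analytic estimate has already been absorbed into the preceding lemmata.
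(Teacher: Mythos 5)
Your proposal is correct and follows essentially the same route as the paper: the decomposition (\ref{3.1}), the bound $V_{1,r}(X)\ll X^{r-1}$ from Lemma \ref{lemma3.1}, and the insertion of $N_r(X;\bfy)\ll X^{\tet_{d,r}}$ (obtained from Lemma \ref{lemma3.3} applied to the coefficient vectors of $\tau_d(\bfy;w)$) into Lemma \ref{lemma3.2}. Your remark that Lemma \ref{lemma3.3} must be read as counting $r$-tuples $(f_\bfa(y_1),\ldots,f_\bfa(y_r))$ rather than the set of individual values, and that its inductive proof in fact delivers this stronger count, is an apt clarification of a point the paper passes over silently.
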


\begin{proof} It follows from Lemma \ref{lemma3.2} together with the bound for 
$N_r(X;\bfy)$ just obtained that
\[
V_{2,r}(X)\ll X^{r+\tet_{d,r}}\max_{\bfy ,\bfu_0}W(X;\bfy,\bfu_0).
\]
The conclusion of the lemma is obtained by substituting this estimate together with that 
supplied by Lemma \ref{lemma3.1} into (\ref{3.1}).
\end{proof}

\section{Exploiting multiplicative relations} Our goal in this section is to estimate the 
quantity $W(X;\bfy,\bfu_0)$ that counts solutions of the multiplicative equations (\ref{3.8}) 
equipped with their ancillary conditions (\ref{3.9}) and (\ref{3.10}). For this purpose, we 
follow closely the trail first adopted in our work with Vaughan \cite[\S2]{VW1997}.

\begin{lemma}\label{lemma4.1} Suppose that $y_1,\ldots ,y_r$ are distinct integers with 
$1\le \bfy\le X$, and that $u_{0j}$ $(1\le j\le r)$ are integers with 
$1\le |u_{0j}|\le CX^{d(r-1)}$. Then one has $W(X;\bfy,\bfu_0)\ll X^{k/r+\eps}$.
\end{lemma}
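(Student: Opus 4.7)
The plan is to follow the divisor-extraction strategy of Vaughan and Wooley \cite[\S2]{VW1997}, proceeding in three steps.

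First I would use the additive constraints (\ref{3.9}) to eliminate all but one column of unknowns. Writing $v_i=u_{i1}$ for $1\le i\le k$, one has $u_{ij}=v_i+y_1-y_j$ for $i\ge 1$ and $j\ge 2$. Hence $W(X;\bfy,\bfu_0)$ counts integer $k$-tuples $\bfv=(v_1,\ldots,v_k)$, subject to the size constraints inherited from (\ref{3.10}), that satisfy the $r-1$ multiplicative relations
\[
u_{0j}\prod_{i=1}^k(v_i+y_1-y_j)=u_{01}\prod_{i=1}^k v_i \quad (2\le j\le r).
\]

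Next, let $N$ denote the common value of these products. Then $N$ is a nonzero integer with $|N|\ll X^{k+d(r-1)}$. For each fixed admissible $N$, the number of $k$-tuples $\bfv$ with $\prod_{i=1}^k v_i=N/u_{01}$ is $\ll X^\eps$, by a signed divisor function estimate applied to $|N/u_{01}|\le X^k$. Consequently
\[
W(X;\bfy,\bfu_0)\ll X^\eps\cdot|\calN|,
\]
where $\calN$ denotes the set of admissible values of $N$, and it suffices to show $|\calN|\ll X^{k/r+\eps}$.

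The third and critical step is the bound on $|\calN|$, obtained by extracting pairwise greatest common divisors. For each pair $1\le s<t\le r$ and each $1\le i\le k$, set $g_{i,s,t}=(u_{is},u_{it})$; since $u_{is}-u_{it}=y_t-y_s$ is a fixed nonzero integer of absolute value at most $X$, $g_{i,s,t}$ is one of $\ll X^\eps$ divisors of $y_t-y_s$. Writing $u_{is}=g_{i,s,t}a_{i,s,t}$ and $u_{it}=g_{i,s,t}b_{i,s,t}$ with $(a_{i,s,t},b_{i,s,t})=1$, the identity $\prod_{i=0}^k u_{is}=\prod_{i=0}^k u_{it}$ reduces, after canceling $\prod_i g_{i,s,t}$, to the coprime relation
\[
u_{0s}\prod_{i=1}^k a_{i,s,t}=u_{0t}\prod_{i=1}^k b_{i,s,t}.
\]
Cascading these extractions along the chain of pairs $(1,t)$ for $t=2,\ldots,r$ and applying divisor estimates at each stage should yield $|\calN|\ll X^{k/r+\eps}$.

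The main obstacle is the careful bookkeeping in this cascade. Each of the $r-1$ pairwise extractions must be organized so that coprimality is fully exploited and no saving is lost to double-counting; the combined divisor bounds must total to a factor $X^{k(r-1)/r}$ of saving off the trivial $X^k$ bound. The factor $1/r$ in the exponent $k/r$ arises from the symmetric role played by each of the $r$ columns in (\ref{3.8}): every variable $v_i$ participates in $r$ simultaneous product relations, so the combined multiplicative constraints cut the $X^k$-many a priori configurations of $\bfv$ down to $X^{k/r+\eps}$.
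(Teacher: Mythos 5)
Your first two steps are fine: eliminating the columns via (\ref{3.9}) and observing that a fixed value of the common product determines $\bfv$ up to $O(X^\eps)$ choices by divisor estimates are both correct, and the observation that each diagonal gcd $g_{i,s,t}=(u_{is},u_{it})$ divides the fixed nonzero integer $y_t-y_s$ (hence takes $O(X^\eps)$ values) is a genuine ingredient of the method. But the third step, which is where the entire content of the lemma lives, does not work as described, and what is missing is an idea rather than bookkeeping. After you write $u_{is}=g_{i,s,t}a_{i,s,t}$, $u_{it}=g_{i,s,t}b_{i,s,t}$, the coprimality $(a_{i,s,t},b_{i,s,t})=1$ holds only for matching subscripts $i$; the factors $a_{i,s,t}$ and $b_{j,s,t}$ with $i\ne j$ may share arbitrarily large common divisors, so the two sides of $u_{0s}\prod_i a_{i,s,t}=u_{0t}\prod_i b_{i,s,t}$ are not coprime and the relation yields no further information without additional extractions. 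More seriously, nothing in your cascade manufactures the exponent $k/r$: at no point do you exhibit a set of parameters whose product is provably at most $X^{k/r}$ and which, once fixed, determines everything else. The heuristic that ``each $v_i$ lies in $r$ product relations'' does not convert into a saving; indeed, even in the case $r=2$ the diagonal gcds alone do not produce the saving $X^{k/2}$.

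The paper's proof instead performs an iterated gcd extraction indexed by the full $r$-dimensional array $\bfi=(i_1,\ldots,i_r)$ with $0\le i_m\le k$, producing $(k+1)^r$ parameters $\alp_\bfi$ (each a common factor of one variable from every column) with $u_{lm}=\pm\prod_{\bfj:\,j_m=l}\alp_\bfj$. The point of this decomposition is a pigeonhole step: the $r$ index sets $\{\bfi: i_l>i_p\ (l\ne p),\ i_l>0\ (1\le l\le r)\}$ are pairwise disjoint, so the product of the corresponding quantities $B_p=\prod\alp_\bfi$ satisfies $B_1\cdots B_r\le\prod_{i=1}^k|u_{i1}|\le X^k$, and hence some $B_p\le X^{k/r}$. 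The parameters in that sector then admit $O(X^{k/r+\eps})$ choices by divisor estimates, the parameters with some $i_m=0$ are controlled by the fixed $u_{0m}$, and the additive relations (\ref{3.9}) determine all remaining parameters up to $O(X^\eps)$ choices. Your proposal would need to be rebuilt around this multi-dimensional decomposition and the disjointness/pigeonhole argument; restricting to pairwise diagonal gcds cannot reach the bound $X^{k/r+\eps}$.
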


\begin{proof} We begin with a notational device from \cite[\S2]{VW1997}. Let $\calI$ 
denote the set of indices $\bfi=(i_1,\ldots ,i_r)$ with $0\le i_m\le k$ $(1\le m\le r)$. Define 
the map $\varphi:\calI \rightarrow [0,(k+1)^r)\cap \dbZ$ by putting
\[
\varphi(\bfi)=\sum_{m=1}^ri_m(k+1)^{m-1}.
\]
The map $\varphi$ is bijective, and we may define the {\it successor} $\bfi+1$ of the index 
$\bfi$ by means of the relation
\[
\bfi+1=\varphi^{-1}(\varphi(\bfi)+1).
\]
We then define $\bfi+h$ inductively via the formula $\bfi+(h+1)=(\bfi+h)+1$. Finally, when 
$\bfi\in \calI$, we write $\calJ(\bfi)$ for the set of indices $\bfj\in \calI$ having the 
property that, for some $h\in \dbN$, one has $\bfj+h=\bfi$. Thus, the set $\calJ(\bfi)$ 
is the set of all precursors of $\bfi$, in the natural sense.\par

Equipped with this notation, we now explain how systematically to extract common factors 
between the variables in the system of equations (\ref{3.8}). Put
\[
\alp_{\bf 0}=(u_{01},u_{02},\ldots ,u_{0r}),
\]
noting that by hypothesis, this integer is fixed. Suppose at stage $\bfi$ that $\alp_\bfj$ has 
been defined for all $\bfj\in \calJ(\bfi)$. We then define
\[
\alp_\bfi=\left( \frac{u_{i_11}}{\bet^{(1)}_\bfi},\frac{u_{i_22}}{\bet^{(2)}_\bfi},\ldots 
,\frac{u_{i_rr}}{\bet^{(r)}_\bfi}\right) ,
\]
in which we write
\[
\bet^{(m)}_\bfi=\prod_{\substack{\bfj\in \calJ(\bfi)\\ j_m=i_m}}\alp_\bfj .
\]
As is usual, the empty product is interpreted to be $1$. As a means of preserving intuition 
concerning the numerous variables generated in this way, we write
\[
\alptil_{lm}^\pm=\pm\prod_{\substack{\bfj\in \calI\\ j_m=l}}\alp_\bfj\quad 
(0\le l\le k,\, 1\le m\le r).
\]
Then, much as in \cite[\S2]{VW1997}, it follows that when $0\le l\le k$ and $1\le m\le r$, 
for some choice of the sign $\pm$, one has $u_{lm}=\alptil_{lm}^\pm$. Note here that the 
ambiguity in the sign of $u_{lm}$ relative to $|\alptil_{lm}^\pm|$ is a feature overlooked in 
the treatment of \cite{VW1997}, though the ensuing argument requires no significant 
modification to be brought to play in order that the same conclusion be obtained. At worst, 
an additional factor $2^{r(k+1)}$ would need to be absorbed into the constants implicit in 
Vinogradov's notation.\par

With this notation in hand, it follows from its definition that $W(X;\bfy,\bfu_0)$ is bounded 
above by the number $\Ome_r(X;\bfy,\bfu_0)$ of solutions of the system
\begin{equation}\label{4.1}
y_1+\alptil_{i1}^\pm=y_2+\alptil_{i2}^\pm=\ldots =y_r+\alptil_{ir}^\pm\quad (1\le i\le k),
\end{equation}
with
\begin{equation}\label{4.2}
1\le |\alptil_{ij}^\pm|\le X\quad (1\le i\le k,\, 1\le j\le r).
\end{equation}
Notice here that $\alptil^\pm_{0m}=u_{0m}$. Thus, it follows from a divisor function 
estimate that when the integers $u_{0m}$ are fixed with
\[
1\le |u_{0m}|\le CX^{d(r-1)}\quad (1\le m\le r),
\]
then there are $O(X^\eps)$ possible choices for the variables $\alp_\bfi$ having the 
property that $i_m=0$ for some index $m$ with $1\le m\le r$.\par

Having carefully prepared the notational infrastructure to make comparison with 
\cite[\S\S2 and 3]{VW1997} transparent, we may now follow the argument of the latter 
mutatis mutandis. When $1\le p\le r$, we write
\begin{equation}\label{4.3}
B_p=\prod_\bfi \alp_\bfi,
\end{equation}
where the product is taken over all $\bfi\in \calI$ with $i_l>i_p$ $(l\ne p)$, and $i_l>0$ 
$(1\le l\le r)$. Thus, in view of (\ref{4.2}), one has
\[
\prod_{p=1}^rB_p\le \prod_{\substack{\bfi\in \calI\\ i_l>0\, (1\le l\le r)}}
\alp_\bfi \le \prod_{i=1}^k|\alptil^\pm_{i1}|\le X^k,
\]
and so in any solution $\bfalp^\pm$ of (\ref{4.1}) counted by $\Ome_r(X;\bfy,\bfu_0)$, 
there exists an index $p$ with $1\le p\le r$ such that
\begin{equation}\label{4.4}
1\le B_p\le X^{k/r}.
\end{equation}
By relabelling variables, we consequently deduce that
\[
\Ome_r(X;\bfy,\bfu_0)\ll \Ups_r(X;\bfy,\bfu_0),
\]
where $\Ups_r(X;\bfy,\bfu_0)$ denotes the number of integral solutions of the system
\begin{equation}\label{4.5}
\alptil_{i1}^\pm-\alptil_{ij}^\pm =L_j\quad (1\le i\le k,\, 2\le j\le r),
\end{equation}
with $L_j=y_j-y_1$ $(2\le j\le r)$, and with the integral tuples $\alp_\bfi$ satisfying 
(\ref{4.2}) together with the inequality
\begin{equation}\label{4.6}
1\le B_1\le X^{k/r}.
\end{equation}
We emphasise here that, when $y_1,\ldots ,y_r$ are distinct, then $L_j\ne 0$ $(2\le j\le r)$.

\par We now proceed under the assumption that $y_1,\ldots ,y_r$ are fixed and distinct, 
whence the integers $L_j$ $(2\le j\le r)$ are fixed and non-zero. It follows just as in the 
final paragraphs of \cite[\S2]{VW1997} that, when the variables $\alp_\bfi$, with 
$\bfi\in \calI$ satisfying $i_l>i_1$ $(2\le l\le r)$, are fixed, then there are $O(X^\eps)$ 
possible choices for the tuples $\alp_\bfi$ satisfying (\ref{4.2}) and (\ref{4.5}). Here we 
make use of the fact that the variables $\alp_\bfi$, in which $i_m=0$ for some index $m$ 
with $1\le m\le r$, may be considered fixed with the potential loss of a factor $O(X^\eps)$ 
in the resulting estimates. By making use of standard estimates for the divisor function, 
however, we find from (\ref{4.6}) and the definition (\ref{4.3}) that there are 
$O(X^{k/r+\eps})$ possible choices for the variables $\alp_\bfi$ with $\bfi\in \calI$ 
satisfying $i_l>i_1$ $(2\le l\le r)$. We therefore infer that 
$\Ups_r(X;\bfy,\bfu_0)\ll X^{k/r+\eps}$, whence $\Ome_r(X;\bfy,\bfu_0)\ll X^{k/r+\eps}$, 
and finally $W(X;\bfy,\bfu_0)\ll X^{k/r+\eps}$. This completes the proof of the lemma.
\end{proof}

The proof of Theorem \ref{theorem1.1} is now at hand. By applying Lemma \ref{lemma3.4} 
in combination with Lemma \ref{lemma4.1}, we obtain the upper bound
\[
I_{k,d}(X)-T_k(X)\ll X^{r+\tet_{d,r}}\cdot X^{k/r+\eps}.
\]
By minimising the right hand side over $2\le r\le k$, a comparison of (\ref{1.6}) and 
(\ref{3.11}) now confirms that this estimate delivers the one claimed in the statement of 
Theorem \ref{theorem1.1}. 

\section{Corollaries and refinements} We complete our discussion of incomplete Vinogradov 
systems by first deriving the corollaries to Theorem \ref{theorem1.1} presented in the 
introduction, and then considering refinements to the main strategy.

\begin{proof}[The proof of Corollary \ref{corollary1.2}] Suppose that $d\le \sqrt{k}$ and 
take $r$ to be the integer closest to $\sqrt{k}$. Thus $d\le r$ and we find from (\ref{1.6}) 
that
\[
\gam_{k,d}\le r+k/r+d(d+1)/2<\sqrt{4k+1}+d(d+1)/2.
\]
An application of Theorem \ref{theorem1.1} therefore leads us to the asymptotic formula 
\[
I_{k,d}(X)=T_k(X)+O(X^{\sqrt{4k+1}+d(d+1)/2}),
\]
confirming the first claim of the corollary. In particular, when $d=o(k^{1/4})$, we discern 
that 
\[
\sqrt{4k+1}+d(d+1)/2\le \sqrt{4k+1}+o(k^{1/2})=(2+o(1))\sqrt{k},
\]
and so the final claim of the corollary follows.
\end{proof}

\begin{proof}[The proof of Corollary \ref{corollary1.3}] Suppose that $d\ge 1$ and 
$k\ge 4d+3$. In this situation, by reference to (\ref{1.6}) with $r=2$, we find that
\[
\gam_{k,d}\le 2+\tfrac{1}{2}k+2d-1=\tfrac{1}{2}(k+4d+2)\le k-\tfrac{1}{2}.
\]
Consequently, it follows from Theorem \ref{theorem1.1} that 
$I_{k,d}(X)-T_k(X)\ll X^{k-1/2}$, so that the first claim of the corollary follows.\par

Next by considering (\ref{1.6}) with $r$ taken to be the integer closest to $\sqrt{k/(d+1)}$, 
we find that
\[
\gam_{k,d}\le rd+(r+k/r)\le (d+1)\sqrt{4k/(d+1)+1}.
\]
In this instance, Theorem \ref{theorem1.1} supplies the asymptotic formula
\[
I_{k,d}(X)=T_k(X)+O(X^{\sqrt{4k(d+1)+(d+1)^2}}),
\]
which establishes the second claim of the corollary.\par

Finally, when $\eta$ is small and positive, and $1\le d\le \eta^2 k$, one finds that
\[
\gam_{k,d}\le \sqrt{4\eta^2k^2+\eta^4 k^2+(4+2\eta^2)k+1}<3\eta k.
\]
The final estimate of the corollary follows, and this completes the proof. 
\end{proof}

Some refinement is possible within the argument applied in the proof of Theorem 
\ref{theorem1.1} for smaller values of $k$. Thus, an argument analogous to that discussed 
in the final paragraph of \cite[\S2]{VW1997} shows that the bound $1\le B_p\le X^{k/r}$ 
of equation (\ref{4.4}) may be replaced by the corresponding bound
\[
1\le B_p\le X^{\ome(k,r)},
\]
where we write
\[
\ome(k,r)=k^{1-r}\sum_{i=1}^{k-1}i^{r-1}.
\]
In order to justify this assertion, denote by $\calI^+$ the set of indices $\bfi\in \calI$ such 
that $i_l>0$ $(1\le l\le r)$, and let $\calI^*$ denote the corresponding set of indices 
subject to the additional condition that for some index $p$ with $1\le p\le r$, one has 
$i_l>i_p$ whenever $l\ne p$. Then, just as in \cite[\S2]{VW1997}, one has 
$\text{card}(\calI^+)=k^r$ and $\text{card}(\calI^*)=r\psi_r(k)$, where
\[
\psi_r(k)=\sum_{i=1}^{k-1}i^{r-1}<k^r/r.
\]

\par In the situation of the proof of Lemma \ref{lemma4.1} in \S4, the variables $\alp_\bfi$ 
with $i_l=0$ for some index $l$ with $1\le l\le r$ are already determined via a divisor 
function estimate. By permuting and relabelling indices $i_l$, for each fixed index $l$, as 
necessary, the argument of the proof can be adapted to show that 
$W(X;\bfy,\bfu_0)\ll Y_r(X)$, where $Y_r(X)$ denotes the number of solutions $\bfalp^\pm$ 
as before, but subject to the additional condition
\[
\prod_{\bfi\in \calI^*}\alp_\bfi \le \Biggl( \prod_{\bfi\in \calI^+}\alp_\bfi \Biggr)^{
\text{card}(\calI^*)/\text{card}(\calI^+)}.
\]
Then
\[
\prod_{p=1}^rB_p\le \prod_{\bfi\in \calI^*}\alp_\bfi \le (X^k)^{r\psi_r(k)/k^r}.
\]
Consequently, in any solution $\bfalp^\pm$ of (\ref{4.1}) counted by 
$\Ome_r(X;\bfy;\bfu_0)$, there exists an index $p$ with $1\le p\le r$ such that
\[
1\le B_p\le X^{\psi_r(k)/k^{r-1}}=X^{\ome(k,r)}.
\]
By pursuing the same argument as in our earlier treatment, mutatis mutandis, we now 
derive the upper bound
\[
I_{k,d}(X)-T_k(X)\ll X^{\gam^\prime_{k,d}+\eps},
\]
where
\[
\gam^\prime_{k,d}=\min_{2\le r\le k}\biggl( r+\ome(k,r)+\sum_{l=1}^r\max 
\{ d-l+1,0\}\biggr).
\]

\par We conclude from these deliberations that Theorem \ref{theorem1.1} and the 
first conclusion of Corollary \ref{corollary1.3} may be refined as follows. 

\begin{theorem}\label{theorem5.1} Suppose that $k\ge 3$ and $0\le d<k/2$. Then, for 
each $\eps>0$, one has
\[
I_{k,d}(X)-T_k(X)\ll X^{\gam^\prime_{k,d}+\eps},
\]
where
\[
\gam^\prime_{k,d}=\min_{2\le r\le k}\biggl( r+k^{1-r}\sum_{i=1}^{k-1}i^{r-1}+
\sum_{l=1}^r\max\{ d-l+1,0\}\biggr) .
\]
In particular, provided that $d\ge 1$ and $k\ge 4d+2$, one has
\[
I_{k,d}(X)=k!X^k+O(X^{k-1/2}).
\]
\end{theorem}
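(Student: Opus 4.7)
The plan is to re-run the proof of Lemma~\ref{lemma4.1} verbatim, replacing the crude estimate $B_p\le X^{k/r}$ from (\ref{4.4}) by the sharper bound $B_p\le X^{\ome(k,r)}$. The torsorial decomposition $u_{lm}=\pm\alptil^{\pm}_{lm}$ of \S4, the reduction to counting integer tuples $\bfalp^{\pm}$ satisfying (\ref{4.1}) subject to (\ref{4.2}), and the absorption of those $\alp_\bfi$ for which some $i_m=0$ into a divisor function estimate at a cost of $X^\eps$, all proceed unchanged. What needs to be refined is only the pigeonhole step that selects the index $p$ for which $B_p$ is small.

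First I would record the combinatorial identities $\text{card}(\calI^+)=k^r$ and $\text{card}(\calI^*)=r\psi_r(k)$, where $\calI^+$ and $\calI^*$ are defined as in the discussion preceding the statement. The key symmetry is that, for each fixed $l$, the system (\ref{4.1}) is invariant under the $S_k$-action that permutes the possible values of the coordinate $i_l$ (equivalently, relabels the $k$ equations), and it is likewise invariant under the $S_r$-action on the second coordinate $m$. Averaging $W(X;\bfy,\bfu_0)$ over these symmetries and applying pigeonhole, one may restrict attention to those solutions additionally satisfying
\[
\prod_{\bfi\in\calI^*}\alp_\bfi\le\Biggl(\prod_{\bfi\in\calI^+}\alp_\bfi\Biggr)^{\text{card}(\calI^*)/\text{card}(\calI^+)}.
\]
Since each $\bfi\in\calI^*$ lies in exactly one $B_p$, the left-hand side is precisely $\prod_{p=1}^r B_p$; on the other hand, the right-hand side is at most $X^{k\cdot r\psi_r(k)/k^r}=X^{r\ome(k,r)}$, because $\prod_{\bfi\in\calI^+}\alp_\bfi\le\prod_{i=1}^k|\alptil^{\pm}_{i1}|\le X^k$. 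Consequently some $B_p$ satisfies $B_p\le X^{\ome(k,r)}$, and the remainder of the divisor function argument in the proof of Lemma~\ref{lemma4.1} produces $W(X;\bfy,\bfu_0)\ll X^{\ome(k,r)+\eps}$. Feeding this into Lemma~\ref{lemma3.4} and minimising over $r$ delivers the first conclusion.

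The main obstacle is the pigeonhole step in the previous paragraph: one must verify that averaging over the permutations of $i_l$ for each fixed $l$ can genuinely be traded for the displayed inequality at the cost of only an absolute constant, rather than some factor growing as a power of $X$ that would spoil the refinement. With that technicality in hand, the second conclusion is immediate. Taking $r=2$, one computes $\ome(k,2)=k^{-1}\sum_{i=1}^{k-1}i=(k-1)/2$, while for $d\ge 1$ one has $\tet_{d,2}=d+(d-1)=2d-1$, so that
\[
\gam'_{k,d}\le 2+\tfrac{1}{2}(k-1)+(2d-1)=\tfrac{1}{2}(k+4d+1),
\]
which is at most $k-\tfrac{1}{2}$ precisely when $k\ge 4d+2$; the asymptotic formula $I_{k,d}(X)=k!X^k+O(X^{k-1/2})$ then follows just as in the proof of Corollary~\ref{corollary1.3}.
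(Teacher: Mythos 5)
Your argument is essentially the paper's own: the same sets $\calI^+$ and $\calI^*$ with $\text{card}(\calI^+)=k^r$ and $\text{card}(\calI^*)=r\psi_r(k)$, the same pigeonhole inequality upgrading $B_p\le X^{k/r}$ to $B_p\le X^{\ome(k,r)}$, the same substitution into Lemma \ref{lemma3.4}, and the same computation at $r=2$ giving $\gam^\prime_{k,d}\le \tfrac{1}{2}(k+4d+1)\le k-\tfrac{1}{2}$ for $k\ge 4d+2$. The one step you flag as the main obstacle is handled in the paper only by appeal to \cite[\S 2]{VW1997}; since the relabelling/averaging there runs over a permutation group whose order depends only on $k$ and $r\le k$, and implicit constants are permitted to depend on $k$, the cost is indeed $O(1)$ and your refinement goes through as claimed.
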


\begin{proof} The proof of the first conclusion has already been outlined. As for the second, 
by taking $r=2$ we discern that
\[
\gam^\prime_{k,d}\le 2+\tfrac{1}{2}(k-1)+2d-1.
\]
Thus, provided that $k>4d+1$, one finds that $\gam^\prime_{k,d}\le k-1/2$, and hence the 
final conclusion of the theorem follows from the first.
\end{proof}

Energetic readers will find a smorgasbord of problems to investigate allied to those 
examined in this paper. We mention three in order to encourage work on these topics.\par

We begin by noting that the conclusions of Theorem \ref{theorem1.1} establish the paucity 
of non-diagonal solutions in the system (\ref{1.5}) when $d$ is smaller than about $k/4$. 
In principle, the methods employed remain useful when $d<k/2$. However, when $d>k/2$ 
the analogue of the identity (\ref{2.4}) that would be obtained would contain terms involving 
$h^2$, or even larger powers of $h$, and this precludes the possibility of eliminating all of 
the terms involving $h$ in any useful manner. A simple test case would be the situation with 
$d=k-1$, wherein the system (\ref{1.5}) assumes the shape
\[
x_1^j+\ldots +x_k^j=y_1^j+\ldots +y_k^j\quad (2\le j\le k).
\]
When $k=3$ an affine slicing approach has been employed in \cite{Woo1996} to resolve 
the associated paucity problem. It would be interesting to address this problem when 
$k\ge 4$.\par

The focus of this paper has been on the situation in which one slice is removed from a 
Vinogradov system. When more than one slice is removed, two or more auxiliary variables 
$h_1,h_2,\ldots $ take the place of the single variable $h$ in the identity (\ref{2.4}), and 
this seems to pose serious problems for our methods. A simple test case in this context 
would address the system of equations
\[
x_1^j+\ldots +x_k^j=y_1^j+\ldots +y_k^j\quad (j\in \{1,2,\ldots ,k-2,k+1\}),
\]
with $k\ge 3$. Here, the situation with $k=3$ has been successfully addressed by a number 
of authors (see \cite{Gre1997, SW1997} and \cite[Corollary 0.3]{Sal2007}), but little seems 
to be known for $k\ge 4$. Much more is known when the omitted slices are carefully chosen 
so that the resulting systems assume a special shape. Most obviously, one could consider 
systems of the shape
\[
x_1^{tj}+\ldots +x_k^{tj}=y_1^{tj}+\ldots +y_k^{tj}\quad (1\le j\le k-1).
\]
By specialising variables, one finds from \cite[Theorem 1]{VW1997} that the number of 
non-diagonal solutions of this system with $1\le \bfx,\bfy\le X$ is $O(X^{t\sqrt{4k+1}})$, 
and this is $o(T_k(X))$ provided only that the integer $t$ is smaller than 
$\tfrac{1}{2}\sqrt{k}-1$. Moreover, the ingenious work of Br\"udern and Robert 
\cite{BR2012} shows that when $k\ge 4$, there is a paucity of non-diagonal solutions to 
systems of the shape
\[
x_1^{2j-1}+\ldots +x_k^{2j-1}=y_1^{2j-1}+\ldots +y_k^{2j-1}\quad (1\le j\le k-1),
\]
wherein all of the even degree slices are omitted. A strategy for systems having arbitrary 
exponents can be extracted from \cite{Woo1993}, though the work there misses a paucity 
estimate by a factor $(\log X)^A$, for a suitable $A>0$.\par

We remark finally that the system of equations (\ref{1.5}) central to Theorem 
\ref{theorem1.1} has the property that there are $k-1$ equations and $k$ pairs of variables 
$x_i,y_i$. No paucity result is available when the number of pairs of variables exceeds $k$. 
The simplest challenge in this direction would be to establish that when $k\ge 3$, one has
\[
J_{k+2,k}(X)=T_{k+2}(X)+o(X^{k+2}).
\]

\bibliographystyle{amsbracket}
\providecommand{\bysame}{\leavevmode\hbox to3em{\hrulefill}\thinspace}

\end{document}